\newtheorem{theorem}{Theorem}[section]
\newtheorem{definition}{Definition}[section]
\newtheorem{claim}{Claim}[section]
\newtheorem{lemma}{Lemma}[section]
\theoremstyle{remark}
\newtheorem{remark}{Remark}
\newcommand{\g}{\gamma}
\newcommand{\si}{\sigma}
\newcommand{\ti}{\tilde}
\newcommand{\om}{\omega}
\newcommand{\fr}{\frac}
\newcommand{\la}{\lambda}
\newcommand{\lf}{\left}
\newcommand{\rg}{\right}
\newcommand{\ep}{\epsilon}
\newcommand{\al}{\alpha}
\newcommand{\real}{\mathbb{R}}
\newcommand{\inte}{\mathrm{int}}
\newcommand{\be}{\beta}
\newcommand{\de}{\delta}
\newcommand{\bl}{\bigl}
\newcommand{\br}{\bigr}
\newcommand{\m}{\mathcal}
\newcommand{\p}{\partial}
\newcommand{\wi}{\widetilde}
\newcommand{\va}{\varphi}
\newcommand{\ta}{\tau}
\begin{document}

\title[Complete negatively curved immersed ends in $\mathbb{R}^3$] {Complete negatively curved immersed ends in $\mathbb{R}^3$}
\author{S\'ergio Mendon\c ca}
\address{Departamento de An\'alise, Instituto de Matem\'atica,
Universidade Federal Fluminense, Niter\'oi, RJ, CEP 24020-140,
Brasil} \email{sergiomendonca@id.uff.br}

\dedicatory {To my beloved wife, Cristina Marques, whose love and sincerity inspire me}
\subjclass[2000]{Primary 53C42; Secondary 53C22}

\keywords{Efimov's theorem, complete immersion, Hadamard manifold}

\begin{abstract} This paper extends, in a sharp way, the famous Efimov's Theorem to 
immersed ends in $\real^3$. More precisely, let $M$ be a non-compact connected surface with compact boundary. Then there is 
no complete isometric immersion of $M$ into $\Bbb R^3$ satisfying that $\int_M |K|=+\infty$ and $K\le-\kappa<0$, where  $\kappa$ is a positive constant and $K$ is the Gaussian curvature of 
$M$. In particular Efimov's Theorem holds for complete Hadamard immersed surfaces, whose Gaussian curvature $K$ is bounded away from zero outside a compact set.  
 \end{abstract}

\maketitle

In 1901 Hilbert ([Hi]) proved that there is no complete immersed hyperbolic plane $\Bbb H^2$ in $\Bbb R^3$. In
1902 Holmgren ([Ho]) presented a new version with a more rigorous proof. Blaschke ([Bs]) and Bieberbach ([Bi])
presented new versions of the proof. In 1955 Blan\v usa (\cite{bl}) presented an example 
of a smooth complete 
isometric embedding 
of $\Bbb H^2$ in $\Bbb R^6$. In 1960 Rozendorn (\cite{ro}) obtained a 
smooth complete isometric immersion of $\Bbb H^2$ in $\Bbb R^5$. 

 In 1936
Cohn-Vossen (\cite{cv}) have conjectured that the hyperbolic plane in Hilbert's Theorem could be replaced by a complete immersed
surface with Gaussian curvature not greater than a negative constant. The solution for this problem came only in 1964 with the work of
Efimov (\cite{e}):

\begin{theorem}[Efimov's Theorem] There is no complete isometric immersion $\varphi:M\to \Bbb R^3$ with Gaussian curvature
$K\le -\kappa<0$, where $M$ is a connected surface.
\end{theorem}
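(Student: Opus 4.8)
The plan is to derive a contradiction by studying the Gauss map $N:M\to S^2$. Since $K\le-\kappa<0$, the surface has no umbilic or flat points, so at every point the shape operator is invertible and $N$ is a local diffeomorphism; moreover its Jacobian equals $K<0$, so $N$ is everywhere orientation-reversing. First I would equip $M$ with the pulled-back round metric $N^*\langle\cdot,\cdot\rangle$, that is, the third fundamental form $III$, with respect to which $N$ becomes a local isometry onto the round sphere.

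The core soft idea is that a local isometry from a \emph{complete} surface onto $S^2$ must be a covering map, and since $S^2$ is simply connected this would force $N$ to be a global diffeomorphism, hence $M\cong S^2$, so $M$ is compact. But a compact surface immersed in $\real^3$ always carries a point of positive curvature (at a point of maximal distance from the origin both principal curvatures share a sign), contradicting $K<0$. Thus the entire theorem reduces to showing that $(M,III)$ is complete, equivalently that $N$ is a proper map.

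Here is where the hypothesis $K\le-\kappa$ is indispensable and where the real work lies. Completeness of the first fundamental form $I$ does not by itself transfer to $III$: the two metrics are related through the principal curvatures $k_1,k_2$ — the $III$-length of a curve weights the $k_i^2$ — and although $k_1k_2=K\le-\kappa$ keeps both curvatures from being simultaneously small, a single principal curvature may still degenerate, so a curve of infinite $I$-length could a priori have finite $III$-length. To control this I would introduce asymptotic (Chebyshev) coordinates $(u,v)$, in which $II$ is purely off-diagonal and $K$ is expressed through the angle $\om$ between the two asymptotic directions; the completeness of $I$ together with $K\le-\kappa$ must then be parlayed into uniform control preventing $\om$ from collapsing to $0$ or $\pi$ and preventing the spherical image from contracting.

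The main obstacle is precisely this analytic heart: one must prove Efimov's lemma on plane mappings — a $C^1$ map of the plane whose Jacobian is bounded away from zero and whose Jacobian has suitably controlled oscillation cannot compress the plane into a bounded region — and then verify that $N$, read in asymptotic coordinates, satisfies the oscillation hypothesis by extracting the requisite a priori estimates from $K\le-\kappa$. Establishing this quantitative estimate, rather than the covering-space argument, is the crux on which the whole theorem turns.
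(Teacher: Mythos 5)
Your soft half is fine as far as it goes: if $(M,\be)$ (with $\be=III=N^*\nu$) were complete, then $N$ would be a Riemannian covering of $S^2$, so $M\cong S^2$ would be compact, and a compact surface immersed in $\real^3$ has a point of positive curvature. But this very argument shows that your proposed reduction is self-defeating rather than a simplification. Under the hypotheses of Efimov's theorem one proves in two lines that $\be$ \emph{cannot} be complete: $M$ is noncompact (again by the positive-curvature-point argument), and a complete noncompact $(M,\be)$ would contain a minimal $\be$-geodesic of length greater than $\pi$, contradicting Myers' theorem applied to $N\circ\g$ -- this is exactly the observation made at the start of Section 4 of the paper. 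So ``prove that $\be$ is complete'' is not an intermediate target that estimates could ever reach; proving it from the hypotheses is literally the same task as deriving the final contradiction, and the covering-space step buys nothing because its hypothesis is refutable from the same assumptions. The actual proofs (Efimov's, Milnor's, and this paper's) have the opposite architecture: they \emph{accept} that $(M,\be)$ is incomplete, pass to its metric completion $\wi M$, and derive the contradiction from the structure of the metric boundary $\de M$.

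The deeper problem is that the mechanism you propose for the ``hard part'' cannot exist. Any argument converting completeness of $I$ plus $K\le-\kappa$ into ``divergent curves have infinite $III$-length'' through uniform control in asymptotic coordinates would be local/asymptotic in nature, and would therefore apply verbatim to the pseudosphere end $S$ of Remark 1 of the paper: there $K\equiv-1$, every curve running out the cusp has infinite $\al$-length, and yet the meridians have Gauss-map images of finite spherical length (indeed the whole end has finite $III$-area, since $A_\be=\int|K|\,dA_\al$ is finite). So no pointwise or asymptotic estimate of the kind you describe is true; the absence of boundary is an irreducibly global hypothesis. The way the global hypothesis is actually made to bite is this: the analytic heart (Lemma \ref{lemmaA}, quoted from Lemma A of [Mi], the only place where completeness of $\al$ enters) shows that $\wi M$ has no \emph{concave} boundary point in the sense of Definition \ref{concave}; then a soft continuation argument (Lemmas \ref{general_lemma}, \ref{geodesicpi}, \ref{small_delta}) shows that absence of concave points forces $N$ to be globally injective with minimizing $\be$-geodesics between any two points, whence $M$ is Hadamard with respect to $\al$, giving $A_\be(M)=\int_M|K|\,dA_\al\ge\kappa\,A_\al(M)=+\infty$, while injectivity of the local isometry $N$ gives $A_\be(M)\le A_\nu(S^2)<+\infty$. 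You have correctly sensed that a quantitative lemma about plane mappings is the crux of Efimov's work, but in the proof it is deployed to exclude concave boundary points of an incomplete metric, not to establish a completeness that is in fact provably false.
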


An extension of the Efimov's Theorem to higher dimensions was the
work of B. Smith and F. Xavier in \cite{sx}, in which it is proved that there exists no
codimension one complete isometric immersion $f:M\to \Bbb R^n$ with the Ricci curvature ${\rm Ric}_M\le -\kappa<0$,  provided that $n=3$, or that $n\ge 4$ and
the sectional curvatures of $M$ do not assume all values in $\Bbb R$. 

Tilla Klotz Milnor in [Mi] published a more detailed version of the proof of the Efimov's Theorem.
In her version the immersion $\varphi:M\to \Bbb R^3$ is $C^2$ and the
induced Riemannian metric on $M$ is just supposed to be $C^1$.
With this hypothesis we may not define the Gaussian curvature by the usual intrinsic method.
Instead we follow [Mi] and define
$K = \frac{eg-f^2}{EG -F^2}$ for the Gaussian curvature in terms of the first and second fundamental forms of the immersion: $I = E dx^2 +2F dxdy + G dy^2$ and $II=e\, dx^2+2 f\,  dx dy+g\, dy^2$.

Our main result is the following

\begin{theoremA} {\it Let $M$ be a non-compact connected surface with compact boundary.
Then
 does not exist any $C^2$ immersion $\varphi:M\to \Bbb R^3$
 inducing a $C^1$ complete Riemannian metric on $M-\p M$ with the Gaussian curvature
 satisfying
 $\int_{M} |K|=+\infty$ and $K\le-\kappa<0$ for
some constant $\kappa$.}
\end{theoremA}

Note that if $\p M=\emptyset$ Theorem A reduces to the Efimov's Theorem. Indeed, 
if there exists an immersion $\varphi:M\to \Bbb R^3$ as in the statement of the Efimov's Theorem, we may compose it with the universal covering $P:\ti M\to M$. The induced 
metric on $\ti M$ will have infinite area (see \cite{mi}), hence the fact that $K\le -\kappa<0$ implies that the total curvature of $\ti M$ is infinite, which contradicts Theorem A.

By applying 
Theorem A to the complement of an open ball, we obtain the following application to Hadamard 
surfaces.
 
\begin{corollaryB} Let $M$ be an open simply-connected surface without boundary. Then
 does not exist any $C^2$ immersion $\varphi:M\to \Bbb R^3$
 inducing a $C^1$ complete metric on $M$ with the Gaussian curvature
 satisfying  $K\le 0$ on $M$ and $K\le-\kappa<0$ on the complement $M-B$, where
 $B$ is an open ball and $\kappa$ is a positive constant.
\end{corollaryB}

\begin{remark} Each hypothesis in Theorem A is essential. Indeed, consider the 
set 
$$S=\lf\{\lf(\cos u\sin t,\sin u\sin t,\cos t+\log \tan \fr t2\rg)\bigm|u\in 
\lf[0,2\pi\rg], t\in\lf[\fr \pi 2+\ep\,,\,\pi\rg)\rg\},$$
for some $\ep >0$, which is a smooth surface with boundary, contained 
in a pseudosphere with Gaussian curvature $-1$ and finite area. Thus 
the surface $S\subset \Bbb R^3$ 
shows that 
the condition that the total curvature is infinite may not be dropped. 
Its universal covering $P:M=\ti S\to S\subset\Bbb R^3$ with the induced metric shows that the compactness of the boundary $\p M$ is also essential, since the other conditions hold. 
Now consider the incomplete surface  $S'=S-\p S$ and its universal covering $P':H\to S'\subset \Bbb R^3$ with induced 
metric. It shows that the hypothesis that $\varphi$ is complete may not be removed from Theorem A. The helicoid shows that 
the condition $K\le-\kappa<0$ may not be replaced by the condition $K<0$.  M. Kuiper showed in \cite{ku} that the condition that $\varphi$ is $C^2$ is essential even in the Hilbert's theorem.  
\end{remark}

\begin{remark} As a byproduct of this paper we provide a more detailed presentation of the proof of the Efimov's theorem. 
\end{remark}

To prove Theorem A, the proof of the Efimov's Theorem will be used  as written
in [Mi]. The author would like to thank Heudson Mirandola, Cristina Marques and
Manolo Heredia for useful discussions during the reading of  that paper.

\begin{remark} It came to our knowledge that the paper [GMT] proves 
Theorem A independently.
\end{remark}

\section{\bf Notations}
Given a Riemannian metric $\om$ on a manifold $M$, let $A_\om$, $L_\om$, $d_\om$ denote, respectively, the area, length, distance associated  to the metric $\om$. Similarly a $\om$-geodesic will
denote a geodesic with respect to the metric $\om$. Given a subset $C\subset M$ we set
$$B_\om(C,r)=\{x\in M\bigm|d_\om(x,C)<r\},$$
$$\bar B_\om(C,r)=\{x\in M\bigm|d_\om(x,C)\le r\}$$
and
$$S_\om(C,r)=\{x\in M\bigm|d_\om(x,C)=r\}.$$

\section{\bf An idea of the proof of Theorem A}

To give an idea of the proof of Theorem A, we consider an 
immersion $\varphi:M\to \Bbb R^3$ as in the statement of 
Theorem A. By passing to the orientable double cover with induced metric if necessary,
we still have the same curvature conditions, the completeness of the 
induced metric $\al$ and the compactness of the boundary. 
Thus we may assume, without loss of generality, that $M$ is orientable. As a consequence  
 there exists a $C^1$ Gauss map $N:M\to S^2$, where $S^2$ is the sphere with  
the standard round metric $\nu$ of curvature $1$. Since $K<0$ the map $N$ is  a local $C^1$ diffeomorphism on $M-\p M$. 
Let $\be$ be the $C^0$ Riemannian metric induced by $N$ on $M-\p M$. We have:
\begin{equation}\label{infinite_area} A_\be(M)=\int_MdA_\be=\int_M|{\rm det}(dN)|\,dA_\al=\int_M|K|\,dA_\al=+\infty.
\end{equation}
In Lemma \ref{Misbounded} below we will show that $(M,\be)$ is
bounded. To do this we will fix some point $p\in M$ far
from the boundary $\p M$. Then we will apply on
a large ball centered at $p$ similar ideas as in [Mi] obtaining a contradiction if 
the distance from $\p M$ is greater than $5\pi$. 
We will present some arguments in a way different from [Mi]. In some points 
our proof is shorter and in another points  we prefer to present more details, 
in order to make the proof
clearer. We also need to be careful to assure that the constructions used never involve points in $\p M$. Finally we will show that boundedness in this case implies pre-compactness 
and finite area of $(M,\be)$, contradicting equation (\ref{infinite_area}) and proving Theorem A.

\section{\bf Some basic facts}

The following simple lemma, which will be used in the proof of Lemma \ref{general_lemma},   
is possibly known, but we didn't find it in the literature.
For completeness we will present its proof in the Appendix, which is partially inspired in the proof of Lemma 3.1 in [Mi]. 

\begin{lemma} \label{topologies} Let $(S,g)$ be a Riemannian smooth surface. Let $D\subset S$ be a connected surface with piecewise smooth boundary with internal angles at the vertices
different from $0$ and $2\pi$.
For $p,q\in D$, let $d_g(p,q)$ denote the distance induced by
the Riemannian metric $g$, and $d_{\inte}(p,q)$ the infimum of the $g$-lengths of
piecewise smooth curves $\g:[0,1]\to
D$ joining $p$ to $q$, such that $\g\bl((0,1)\br)\subset \inte(D)$, where $\inte(D)$ is the set of
interior points of $D$ with respect to the metric $d_g$. Then $d_\inte$ is a distance on $D$, and the distances
$d_g$ and $d_\inte$ induce the same topology on $D$.
\end{lemma}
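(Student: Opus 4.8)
The plan is to dispose of the metric axioms first and then reduce the topological equivalence to a single local statement. Note that every curve admissible for $d_\inte$ is in particular a curve in $S$ joining its endpoints, so $d_g(p,q)\le d_\inte(p,q)$ for all $p,q\in D$; this already shows that the identity map $(D,d_\inte)\to(D,d_g)$ is $1$-Lipschitz, hence continuous, and that $d_\inte(p,q)=0$ forces $d_g(p,q)=0$, i.e. $p=q$. Symmetry is immediate by reversing curves, and finiteness will follow once any two points of $D$ can be joined by an admissible curve, which in turn follows from the local constructions below together with the connectedness of $\inte(D)$. The only genuinely nontrivial axiom is the triangle inequality: given near-optimal admissible curves from $p$ to $q$ and from $q$ to $r$, their concatenation passes through $q$ at an interior time, which is harmless if $q\in\inte(D)$ but violates admissibility when $q\in\p D$. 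I would repair this by the same local rerouting near $q$ used for the topology statement, replacing a small sub-arc through $q$ by a short arc lying in $\inte(D)$.

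Everything thus reduces to one local statement, which also yields the remaining and harder topological direction: \emph{for every $x\in D$ and $\ep>0$ there is $\de>0$ such that $q\in D$ and $d_g(x,q)<\de$ imply $d_\inte(x,q)<\ep$}. Granting this, the identity $(D,d_g)\to(D,d_\inte)$ is continuous, and together with the Lipschitz bound above the two metrics induce the same topology. I would prove the local statement by choosing, around each $x$, coordinates in which $g$ is a perturbation (by factors tending to $1$ as the neighborhood shrinks) of a flat model adapted to $\p D$, and comparing $d_\inte$ with the flat intrinsic distance. There are three cases. If $x\in\inte(D)$, a small $d_g$-ball about $x$ lies in $\inte(D)$, where $d_\inte=d_g$. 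If $x$ is a smooth boundary point, the model is a Euclidean half-plane $\{y\ge0\}$, which is convex, so two nearby points of $D$ are joined by their straight segment; pushing it slightly into $\{y>0\}$ keeps its interior in $\inte(D)$ at negligible length cost, giving $d_\inte(x,q)\le(1+o(1))\,d_g(x,q)$.

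The decisive case is $x$ a vertex of interior angle $\theta$, where the flat model is a planar sector of opening $\theta$. The hypothesis $\theta\neq0$ guarantees that this is a genuine two-dimensional wedge rather than a degenerate cusp, so $x$ remains reachable from $\inte(D)$ with comparable length; the hypothesis $\theta\neq2\pi$ is what keeps the comparison constant finite. When $\theta\le\pi$ the sector is convex and the straight segment works as before. When $\pi<\theta<2\pi$, the segment between points $p,q$ near the two bounding rays may cross the complementary wedge of opening $\omega=2\pi-\theta$, in which case the shortest path staying in the sector bends at the vertex $O$ and has length $|Op|+|Oq|$. Since the segment is blocked, the Euclidean angle $\angle pOq$ is at least $\omega>0$, and the law of cosines $|pq|^2=|Op|^2+|Oq|^2-2|Op||Oq|\cos\angle pOq$ then gives $|Op|+|Oq|\le C(\theta)\,|pq|$ with $C(\theta)=\sqrt{2/(1-\cos\omega)}$, the maximum being attained at $|Op|=|Oq|$. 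This constant is finite precisely because $\omega>0$, i.e. $\theta<2\pi$, and it blows up as $\theta\to2\pi$, in exact agreement with the slit example, where the conclusion genuinely fails. Routing through $O$ and then perturbing off the vertex and off the rays into $\inte(D)$ converts this into the required bound $d_\inte(x,q)\le C(\theta)(1+o(1))\,d_g(x,q)$.

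I expect the main obstacle to be twofold. First, one must control $g$ as a uniform perturbation of the flat sector model near a vertex, so that the flat estimate $|Op|+|Oq|\le C(\theta)|pq|$ survives multiplication by factors tending to $1$; this requires straightening the two smooth boundary arcs into rays and estimating the resulting metric distortion. Second, the interior rerouting near boundary junctions must be carried out cleanly enough to serve simultaneously in the triangle inequality, in the proof of finiteness, and in the local topology statement. Once the sector estimate is in hand, assembling the local bounds by a covering argument to reach the global conclusions is routine.
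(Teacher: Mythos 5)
Your proposal is correct, but it reaches the decisive local statement by a genuinely different route from the paper. Both arguments share the same skeleton: $d_g\le d_\inte$ makes one direction trivial, everything reduces to the local claim that points $d_g$-close to a boundary point are $d_\inte$-close to it, and the triangle inequality at a middle point $q\in\p D$ is repaired by the same local rerouting. The difference is in how the local claim is proved. The paper never compares with a flat model: it chooses a single unit vector $v$ at the boundary point, pointing into $\inte(D)$ and transversal to both one-sided tangents of $\p D$ (this is exactly where the hypothesis on the internal angles enters), parallel transports $v$ along the boundary curve $\si$, and uses the map $\si_s(t)=\exp_{\si(t)}sv_t$ to sweep out a compact neighborhood $X$; the detour curve then runs inward along the geodesic $\g_0$, across along the pushed-in boundary copy $\si_\eta$, and back down a geodesic $\g_{t_0}$. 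This single construction treats smooth points and vertices (convex or reflex) uniformly, needs no constants and no case analysis, and directly supplies the interior-to-interior connecting arcs used again in the triangle inequality. Your approach instead straightens the boundary into an exact flat wedge and proves a quantitative bi-Lipschitz bound $d_\inte\le C(\theta)(1+o(1))\,d_g$ with the sharp constant $C(\theta)=\sqrt{2/\bl(1-\cos(2\pi-\theta)\br)}$; this is stronger than what the lemma asserts and isolates cleanly why $\theta\ne 2\pi$ is essential. The price is the straightening step you flag, and it is genuinely needed, not cosmetic: if the boundary arcs are merely tangent to the model rays they may bend outward, and then your straight or bent model paths can leave $D$, so an approximate model does not suffice. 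That step is, however, standard: straighten $\g_1$ via the decomposition $z=\g_1(s)+u\,w$ with $w$ transversal to $\g_1'(0)$, then straighten the image of $\g_2$ by a map of the same type built with transversal direction $\g_1'(0)$, which fixes the first ray pointwise; both maps have derivative the identity at the vertex, so the metric distortion is $1+o(1)$, with a separate piecewise smooth variant when $\theta=\pi$ (tangent one-sided directions). Finally, note that for the lemma as stated the constant $C(\theta)$ is never actually needed: continuity of the identity at a vertex uses only the perturbed radial path, and the rerouting for the triangle inequality only needs the corner-cutting arc, whose length tends to zero automatically; so your law-of-cosines estimate buys a stronger, quantitative conclusion rather than being required for this one.
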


Take positive numbers $r,s$ with $s+2r<\pi$. Consider a unit speed minimal $\nu$-geodesic $\g:\Bbb R\to S^2$. Fix $z=\g(0)$ and $u=\g(s)$. Consider the antipodal 
points $p=\g\lf(\fr{s-\pi}2\rg)$,  $p^*=\g\lf(\fr{s+\pi}2\rg)$, which satisfy 
$d_\nu(p,z)=d_\nu(p^*,u)=\fr{\pi-s}2$. Set
\begin{equation}\label{Xdef}X=\bar B_\nu(z,r)\,\cup\,\g([0,s])\,\cup\,\bar B_\nu(u,r).
\end{equation}

We will recall the construction of the convex hull of $X$, which will be needed 
in the proof of Lemma \ref{small_delta} below. 
Fixing the antipodal points $p$ and $p^*$, we rotate $\g$ in both directions until we
obtain exactly two geodesics $\g_1:\lf[\fr{s-\pi} 2,\fr{s+\pi} 2\rg]\to S^2$ and $\g_2:
\lf[\fr{s-\pi} 2,\fr{s+\pi} 2\rg]\to S^2$ from $p$ to $p^*$, which intersect tangentially both $S_\nu(z,r)$ and $S_\nu(u,r)$. This is
possible since $s+2r<\pi$. For each $i\in\{1,2\}$, consider points $p_i$ and $q_i$ given by 
$$\{p_i\}=S_\nu(z,r)\,\cap\, \g_i\lf(\lf[\fr{s-\pi} 2,\fr{s+\pi} 2\rg]\rg), \,\{q_i\}=S_\nu(u,r)\,\cap\, \g_i\lf(\lf[\fr{s-\pi} 2,\fr{s+\pi} 2\rg]\rg).$$ 
Let $Y(X)$ be the compact domain containing $X$, whose boundary 
of $C^1$ class is the image of a curve which follows $\g_1$ from 
$p_1$ to $q_1$, then the arc in $S_\nu(u,r)$ from $q_1$ to $q_2$ which contains 
$\g(s+r)$, then the image of 
$\g_2$ in the opposite direction from $q_2$ to $p_2$, and then the arc in $S_\nu(z,r)$ from $p_2$ to $p_1$ which 
contains $\g(-r)$. Since $r<\fr\pi 2$, the balls $B_\nu(z,r)$ and $B_\nu(u,r)$ are 
strongly convex, hence it is easy to prove the following

\begin{lemma} Given positive numbers $r,s$ such that $s+2r<\pi$ and 
$X$ defined as in {\rm(\ref{Xdef})}, the set $Y=Y(X)$ is the convex hull of $X$, and $Y$ is strongly convex. 
\end{lemma}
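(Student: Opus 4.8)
The plan is to exhibit $Y$ as the convex hull by first confining the whole configuration to an open hemisphere, and then establishing, in turn, the convexity of $Y$, the inclusion $X\subset Y$, and the minimality of $Y$. First I would fix the midpoint $m=\gamma(s/2)$ and observe that $X\subset\bar B_\nu\bl(m,\frac s2+r\br)$: any point of $\bar B_\nu(z,r)$ (resp.\ $\bar B_\nu(u,r)$) is within $r$ of $z$ (resp.\ $u$), and $d_\nu(z,m)=d_\nu(u,m)=\frac s2$, so the triangle inequality gives the bound, while clearly $\gamma([0,s])\subset\bar B_\nu\bl(m,\frac s2\br)$. Since $s+2r<\pi$ we have $\frac s2+r<\frac\pi2$, so $\bar B_\nu\bl(m,\frac s2+r\br)$ is a strongly convex metric ball contained in the open hemisphere $B_\nu\bl(m,\frac\pi2\br)$; in particular any two of its points are joined by a unique minimizing $\nu$-geodesic. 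The same confinement holds for $Y$ itself, because $\p Y$ is built from arcs of $S_\nu(z,r),S_\nu(u,r)$ (which lie in the two balls) and from the geodesic subarcs of $\gamma_1,\gamma_2$ joining $p_i$ to $q_i$, each of length $<\pi$ (the parts near $p,p^*$ having been cut off), hence minimizing and contained in the convex ball $\bar B_\nu\bl(m,\frac s2+r\br)$.

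Next I would prove that $Y$ is strongly convex. Its boundary is a $C^1$ closed curve whose geodesic curvature has constant sign with respect to the inward normal: it vanishes along the geodesic arcs $\gamma_1,\gamma_2$ and is strictly positive along the arcs of $S_\nu(z,r),S_\nu(u,r)$, since circles of radius $r<\frac\pi2$ curve toward their centers $z,u\in Y$, and the tangential ($C^1$) matching at the points $p_i,q_i$ introduces no reflex corners. A clean way to conclude is to push everything through the central (gnomonic) projection of the hemisphere $B_\nu\bl(m,\frac\pi2\br)$ onto the plane, under which $\nu$-geodesics become straight lines and geodesically convex sets correspond to planar convex sets: $Y$ then maps to a bounded planar region bounded by two segments and two locally convex arcs, which is therefore convex, and pulling back gives the geodesic convexity of $Y$. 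Together with the uniqueness of minimizing geodesics inside the hemisphere, this yields the strong convexity of $Y$.

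I would then check $X\subset Y$. Each $\gamma_i$ is tangent to the strongly convex circles $S_\nu(z,r)$ and $S_\nu(u,r)$, so it is a supporting geodesic of each ball; since $\gamma_1$ and $\gamma_2$ were obtained by rotating $\gamma$ (which passes through the centers $z,u$) in opposite directions until tangency, both balls lie on the inner side of $\gamma_1$ and of $\gamma_2$, and the relevant circle arcs form part of $\p Y$. Hence $\bar B_\nu(z,r),\bar B_\nu(u,r)\subset Y$, and trivially $\gamma([0,s])\subset Y$, giving $X\subset Y$. Combining this with the previous step yields $\mathrm{conv}(X)\subseteq Y$, as $Y$ is a convex set containing $X$.

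Finally I would prove the reverse inclusion $Y\subseteq\mathrm{conv}(X)$ by showing $\p Y\subset\mathrm{conv}(X)$: the boundary circle arcs lie in $\bar B_\nu(z,r)\cup\bar B_\nu(u,r)\subset X$, while each boundary subarc of $\gamma_i$ is the unique minimizing geodesic between $p_i\in S_\nu(z,r)\subset X$ and $q_i\in S_\nu(u,r)\subset X$, hence lies in $\mathrm{conv}(X)$. Given any interior point $y\in Y$, a minimizing geodesic chord of the strongly convex set $Y$ through $y$ meets $\p Y$ at two points of $\mathrm{conv}(X)$, and by convexity the whole chord, and in particular $y$, lies in $\mathrm{conv}(X)$. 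Thus $Y=\mathrm{conv}(X)$. The main obstacle I anticipate is the rigorous passage from the local convexity (one-signed geodesic curvature) of the $C^1$ boundary to the global geodesic convexity of $Y$, while simultaneously guaranteeing uniqueness of all the minimizing geodesics involved; the hemisphere confinement carried out in the first step is exactly what makes both of these issues tractable.
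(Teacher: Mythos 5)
The paper itself offers no proof of this lemma: it is stated as ``easy to prove'' once one notes that the balls $B_\nu(z,r)$ and $B_\nu(u,r)$ are strongly convex, so your argument fills a gap left to the reader rather than paralleling an existing proof. Your route --- confine everything to the ball $\bar B_\nu\bigl(m,\frac s2+r\bigr)$ of radius $<\frac\pi2$, use the gnomonic projection of that hemisphere to translate spherical geodesic convexity into planar convexity, and then identify $Y$ with the hull by analyzing $\partial Y$ --- is sound, and the quantitative facts you rely on are correct: $s+2r<\pi$ gives $\frac s2+r<\frac\pi2$; the subarcs of $\gamma_1,\gamma_2$ from $p_i$ to $q_i$ have length $<\pi$, hence are the unique minimizing geodesics between their endpoints and lie in every convex set containing $X$; and tangency forces each $\gamma_i$ to support both disks on the same side. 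Two points deserve tightening before the proof is complete. First, $\partial Y\subset\bar B_\nu\bigl(m,\frac s2+r\bigr)$ does not by itself yield $Y\subset\bar B_\nu\bigl(m,\frac s2+r\bigr)$: you must also decide which of the two complementary domains of $\partial Y$ is $Y$. This is easy but not automatic; for instance, $\gamma$ crosses $\partial Y$ transversally at $\gamma(s+r)$, so $\gamma(s+r+\epsilon)\notin Y$ for small $\epsilon>0$, hence the whole arc of the great circle of $\gamma$ through $p^*$ and $m^*$ avoids $Y$, and then $S^2-\bar B_\nu\bigl(m,\frac s2+r\bigr)$, being connected, disjoint from $\partial Y$, and containing $m^*$, lies in $S^2-Y$. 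Second, the step you yourself single out as the crux --- passing from one-signed curvature of the $C^1$ boundary to convexity of the region --- remains, even after gnomonic projection, a genuine classical theorem (Tietze's local-to-global convexity theorem); it should be cited or replaced by an explicit argument, e.g.\ checking that near each of the four tangency corners the planar image is the epigraph of a convex function, and that a compact connected locally convex planar set is convex. With those two repairs your proof is correct, and it is a reasonable way to make precise what the paper dismisses as easy.
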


Fix a point $p\in S^2$ and $0<r\le\fr\pi 2$. Consider the circle $\m S=S_\nu(p,r)=S_\nu(p^*,\pi-r)$. Let $W_{pr}$ be the set of pairs $(x,v)$ in the normal fiber bundle of $\m S$ satisfying  one of the following three conditions: $v=0$; $0<|v|<r$ and $v$ points to $B_\nu(p,r)$;  $0<|v|<\pi-r$ and $v$ points to 
$B_\nu(p^*,\pi-r)$. By using spherical coordinates it is easy to show that the normal  exponential map $\exp^\perp:W_{pr}\to S^2-\{p,p^*\}$ is a diffeomorphism and 
 $\exp^\perp(\p W_{pr})=\{p,p^*\}$. Thus it is easy to prove the following well known 
Lemma, which will be used in the proof of Lemma \ref{small_delta} below.

\begin{lemma}\label{sphere_radius_r} Fix $0<r\le\fr\pi 2$ and $p\in S^2$. 
If $x\in B_\nu(p,r)-\{p\}$ 
and $\g:[0,d]\to S^2$ is a unit speed geodesic from $x$ to $\m S=S_\nu(p,r)$ with 
$0<d<r$ and $\g'(d)$ orthogonal to $\m S$, then $\g$ is the unique unit speed 
geodesic from $x$ to $\m S$ such that $d_\nu(x,\m S)=L_\nu(\g)$. If 
$z\in B_\nu(p^*,\pi-r)-\{p^*\}$ 
and $\si:[0,e]\to S^2$ is a unit speed geodesic from $z$ to $\m S$ with 
$0<e<\pi-r$ and $\g'(e)$ orthogonal to $\m S$, then $\si$ is the unique unit speed 
geodesic from $z$ to $\m S$ such that $d_\nu(z,\m S)=L_\nu(\si)$.
\end{lemma}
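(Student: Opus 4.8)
The plan is to deduce the Lemma directly from the injectivity of the normal exponential map $\exp^\perp$ on $W_{pr}$ established just above, together with the elementary observation that on $S^2$ the three sets $B_\nu(p,r)$, $\m S$ and $B_\nu(p^*,\pi-r)$ are precisely the loci where $d_\nu(\cdot,p)$ is $<r$, $=r$ and $>r$, respectively. I would treat the first assertion in detail; the second is identical after interchanging the roles of $p,p^*$ and of $r,\pi-r$ (the hypothesis ``$\g'(e)$ orthogonal'' there being a misprint for $\si'(e)$).

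First I would identify the distance realizer explicitly. Put $\rho=d_\nu(x,p)\in(0,r)$ and let $\g_0$ be the unit speed geodesic obtained by prolonging the minimal geodesic from $p$ through $x$ until it meets $\m S=S_\nu(p,r)$; since $r\le\fr\pi2<\pi$ this prolongation is minimal, so its sub-arc starting at $x$ has length $r-\rho$, and the triangle inequality $d_\nu(x,w)\ge d_\nu(p,w)-d_\nu(p,x)=r-\rho$ for every $w\in\m S$ shows $d_\nu(x,\m S)=r-\rho<r$. Being radial, $\g_0$ meets the geodesic sphere $\m S$ orthogonally.

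Next I would prove the rigidity. Let $\eta:[0,\ell]\to S^2$ be any unit speed geodesic from $x$ to $\m S$ meeting $\m S$ orthogonally with $0<\ell<r$. Reversing it, $\ti\eta(t)=\eta(\ell-t)$ is a geodesic issuing orthogonally from $\eta(\ell)\in\m S$ and ending at $x$, and its initial normal vector $v=\ell\,\ti\eta'(0)$ satisfies $|v|=\ell<r$ and must point into $B_\nu(p,r)$: otherwise $\ti\eta$ would run inside $B_\nu(p^*,\pi-r)=\{d_\nu(\cdot,p)>r\}$ for all small positive parameter, a set disjoint from $B_\nu(p,r)\ni x$, contradicting $\ell<r\le\pi-r$. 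Hence $(\eta(\ell),v)\in W_{pr}$ with $\exp^\perp(\eta(\ell),v)=x$. Since $x\in B_\nu(p,r)-\{p\}\subset S^2-\{p,p^*\}$, the injectivity of $\exp^\perp$ on $W_{pr}$ forces $(\eta(\ell),v)$, and therefore $\eta$ itself, to be uniquely determined by $x$.

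Both $\g$ (by hypothesis) and $\g_0$ are geodesics of exactly the type just described, so the rigidity gives $\g=\g_0$; in particular $L_\nu(\g)=r-\rho=d_\nu(x,\m S)$, which is the first half of the claim. For uniqueness among distance realizers, I would note that any unit speed geodesic from $x$ to $\m S$ of length $d_\nu(x,\m S)=r-\rho<r$ meets $\m S$ orthogonally by the first variation of arc length, hence is again of the described type and so coincides with $\g$. The one step requiring genuine care is determining the side into which $v$ points in the rigidity argument; this is handled by the complementarity of $B_\nu(p,r)$ and $B_\nu(p^*,\pi-r)$ noted at the outset, while everything else is a formal consequence of the diffeomorphism property of $\exp^\perp$.
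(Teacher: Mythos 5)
Your route through the injectivity of $\exp^\perp$ on $W_{pr}$ is exactly what the paper intends (it offers no separate proof, presenting the lemma as an easy consequence of that diffeomorphism), and your argument for the \emph{first} assertion is sound: since $\ell<r\le\pi-r$, the pair (foot point, reversed normal vector) lies in $W_{pr}$ whichever side the vector points to, so injectivity alone forces it to coincide with the data of the reversed radial segment; combined with your triangle--inequality computation of $d_\nu(x,\m S)$ and the first variation argument, this gives both the minimizing property and the uniqueness. (Your side-determination step is in fact superfluous there, and it is also the one place you argue loosely: to reach a contradiction you need the outward geodesic to stay in $B_\nu(p^*,\pi-r)$ up to time $\ell$, not merely for small parameter; this holds because a geodesic orthogonal to $\m S$ is radial, so that $d_\nu(\ti\eta(t),p)=r+t$ for all $t\le\ell<\pi-r$.)

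The genuine gap is the sentence claiming the second assertion is ``identical after interchanging the roles of $p,p^*$ and of $r,\pi-r$.'' The two halves are \emph{not} symmetric, precisely because $r\le\fr\pi 2\le\pi-r$. In the second half the reversed geodesic has a normal vector of length $e$ known only to satisfy $e<\pi-r$; if that vector points toward $B_\nu(p,r)$, membership in $W_{pr}$ would require $e<r$, which may fail, and then the injectivity of $\exp^\perp$ gives nothing. Your side argument does not exclude this case either: when $r<\fr\pi 3$, the radial geodesic that leaves $\m S$ toward $p$, passes through $p$ at time $r$, re-crosses $\m S$ at time $2r$ and stops at a time $e\in(2r,\pi-r)$ ends at a point $z$ with $d_\nu(z,p)=e-r>r$, that is, $z\in B_\nu(p^*,\pi-r)-\{p^*\}$. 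Run backwards, this is a unit speed geodesic from such a $z$ to $\m S$, of length $e<\pi-r$, orthogonal to $\m S$ at its endpoint --- all the stated hypotheses --- yet it is not minimizing, since $d_\nu(z,\m S)=e-2r<e$. So the second assertion is only correct if ``geodesic from $z$ to $\m S$'' is read as meeting $\m S$ for the first time at its terminal point (the example above crosses $\m S$ already at time $e-2r$), and a complete proof must use that reading explicitly: if the reversed normal vector pointed toward $p$, then either $e\le 2r$, in which case the radial geodesic ends inside $\bar B_\nu(p,r)$, a set disjoint from $B_\nu(p^*,\pi-r)$, or $e>2r$, in which case it meets $\m S$ again before its endpoint; both are contradictions, and only after this case analysis does the vector point toward $B_\nu(p^*,\pi-r)$, where $e<\pi-r$ puts you back inside $W_{pr}$ and your injectivity argument applies.
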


\begin{lemma} \label{Ls} Consider unit speed $\nu$-geodesics $\g:[0,\mu]\to S^2$, with 
$0<\mu<\pi$ and
$\eta:[0,\pi]\to S^2$ with $\eta(0)=\g\lf(\fr \mu 2\rg)$ and $\eta'(0)$ orthogonal to $\g$. 
For $0\le s<\fr\pi 2$, set $q_{s}=
\eta\lf(-\fr\pi 2+s\rg)$ and $q^s
=\eta\lf(\fr\pi 2-s\rg)$. If $z=\g(0)$ and $u=\g(\mu)$, consider the distance 
$d_s=d_\nu(q_s,z)=d_\nu(q_s,u)=d_\nu(q^s,z)=d_\nu(q^s,u)$. Set $D_0=\g\bl([0,\mu]\br)$ 
and 
$$D_s=\bar B_\nu(q_s,d_s)\cap \bar B_\nu(q^s,d_s),$$
if $0<s<\fr\pi 2$. Then for $0\le s<s'<\fr\pi 2$ it holds that $D_s\subset D_{s'}$.
\end{lemma}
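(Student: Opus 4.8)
The plan is to compute everything explicitly in coordinates, after which the inclusion becomes a one-line consequence of the monotonicity of the tangent function. First I would exploit the symmetry to fix a convenient model: regard $S^2\subset\real^3$ with the round metric $\nu$, place the common point $\g\lf(\fr\mu2\rg)=\eta(0)$ at $(1,0,0)$, let $\g$ lie in the $x_1x_2$-plane and $\eta$ in the $x_1x_3$-plane (permissible since $\eta'(0)\perp\g'\lf(\fr\mu2\rg)$). Then $z=\g(0)=\lf(\cos\fr\mu2,-\sin\fr\mu2,0\rg)$, $u=\g(\mu)=\lf(\cos\fr\mu2,\sin\fr\mu2,0\rg)$, and for $0<s<\fr\pi2$, $q_s=\lf(\sin s,0,-\cos s\rg)$ and $q^s=\lf(\sin s,0,\cos s\rg)$. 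A direct inner-product computation gives $\cos d_\nu(q_s,z)=\langle q_s,z\rangle=\sin s\cos\fr\mu2$, and the same value is obtained for the pairs $(q_s,u)$, $(q^s,z)$, $(q^s,u)$; this confirms that $d_s$ is well defined and equals $\arccos\lf(\sin s\cos\fr\mu2\rg)$. Since $0<\mu<\pi$ we have $\cos\fr\mu2>0$, so $\cos d_s>0$ and hence $0<d_s<\fr\pi2$ whenever $0<s<\fr\pi2$.

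Next I would translate the two ball conditions into linear inequalities. For $x=(x_1,x_2,x_3)\in S^2$ one has $\langle x,q_s\rangle=x_1\sin s-x_3\cos s$ and $\langle x,q^s\rangle=x_1\sin s+x_3\cos s$, so $x\in D_s$ is equivalent to $x_1\sin s-x_3\cos s\ge\sin s\cos\fr\mu2$ together with $x_1\sin s+x_3\cos s\ge\sin s\cos\fr\mu2$. Adding these and dividing by $2\sin s>0$ gives $x_1\ge\cos\fr\mu2$; subtracting and dividing by $\cos s>0$ gives $|x_3|\le\tan s\,\lf(x_1-\cos\fr\mu2\rg)$. Thus, for $0<s<\fr\pi2$,
\begin{equation*}
D_s=\lf\{x\in S^2\ \bigm|\ x_1\ge\cos\fr\mu2\ \text{ and }\ |x_3|\le\tan s\,\lf(x_1-\cos\fr\mu2\rg)\rg\}.
\end{equation*}

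With this description the conclusion is immediate. For $0<s<s'<\fr\pi2$ the function $\tan$ is increasing on $\lf(0,\fr\pi2\rg)$, so if $x\in D_s$ then $x_1-\cos\fr\mu2\ge0$ yields $|x_3|\le\tan s\,\lf(x_1-\cos\fr\mu2\rg)\le\tan s'\,\lf(x_1-\cos\fr\mu2\rg)$, whence $x\in D_{s'}$; this proves $D_s\subset D_{s'}$. For the remaining case $s=0$, a point of $D_0=\g\bl([0,\mu]\br)$ has the form $\lf(\cos t,\sin t,0\rg)$ with $|t|\le\fr\mu2<\fr\pi2$, so $x_1=\cos t\ge\cos\fr\mu2$ and $x_3=0\le\tan s'\,\lf(x_1-\cos\fr\mu2\rg)$; hence $D_0\subset D_{s'}$, completing the argument.

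I expect the only delicate point to be the bookkeeping in the coordinate setup — in particular getting the signs in the expressions for $q_s,q^s$ right and verifying that all four distances coincide so that $d_s$ is unambiguous — since once the inequality description of $D_s$ is in hand the monotonicity is trivial. It is worth noting that this explicit description also makes convexity of $D_s$ transparent and dispenses with any separate appeal to strong convexity of geodesic balls of radius less than $\fr\pi2$.
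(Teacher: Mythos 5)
Your proof is correct, and it takes a genuinely different route from the paper. The paper argues synthetically: it derives the same identity $\cos d_s=\sin s\,\cos\frac{\mu}{2}$ from the spherical law of cosines, gets $D_0\subset D_s$ from the strong convexity of an intersection of balls of radius $<\frac{\pi}{2}$, and proves $D_s\subset D_{s'}$ by a variational argument — maximizing $d_\nu(\cdot,q_{s'})$ over $D_s$, using the first variation formula and Lemma \ref{sphere_radius_r} to exclude maximizers in the interiors of the two boundary arcs, so that the maximum occurs at the vertices $z,u$. You instead linearize everything: in suitable coordinates each ball condition becomes a linear inequality in $\mathbb{R}^3$, $D_s$ becomes $\{x_1\ge\cos\frac{\mu}{2},\ |x_3|\le\tan s\,(x_1-\cos\frac{\mu}{2})\}$, and the nesting is just monotonicity of $\tan$; this is shorter, entirely elementary, and needs neither Lemma \ref{sphere_radius_r} nor any convexity input, while also handling the $s=0$ case uniformly. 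What the paper's argument buys in exchange is an intermediate fact that is reused later: the proof of Lemma \ref{small_delta} explicitly invokes "similarly as in the proof of Lemma \ref{Ls}" to locate the maximum of $y\in\partial D_s\mapsto d_\nu(N(w_i),y)$ at the vertices, so the variational technique does double duty, whereas your computation would have to be redone (or extended — your coordinate description does yield that fact too) at that point. One small wording quibble: "subtracting" the two inequalities is not the operation you actually perform; what you do is use each inequality to bound $\pm x_3\cos s$ above by $\sin s\,(x_1-\cos\frac{\mu}{2})$ and then combine into the absolute-value bound — the resulting set equality is correct, but you should phrase that step accurately.
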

\begin{proof} 
We first observe that 
\begin{equation} \label{pLs} \{z,u\}\subset \p L_s,
\end{equation} 
for all $0\le s<\fr\pi 2$. 

By the spherical law of cosines we have that
\begin{equation} \label{cosines}\cos d_s=\sin s\,\cos\fr{\mu}2,
\end{equation}
for $0\le s<\fr\pi 2$. 

Fix $0<s<\fr\pi 2$. Equation (\ref{cosines}) implies that $0<d_s< \fr\pi 2$, 
hence $D_s$ is strongly convex. In particular we have by (\ref{pLs}) that $D_0\subset D_s$. 
 
Now fix $0<s<s'< \fr\pi 2$.  To prove that $D_s\subset D_{s'}$ we take $x\in D_s$. Thus 
we have that $d_\nu(x,q_s)\le d_s$.  
By triangle inequality we have that
\begin{equation} \label {compare_pi}d_\nu(x,q_{s'})\le d_\nu(x,q_s)+(s'-s)\le d_s+s'<\fr\pi 2+s'<\pi.
\end{equation}

Fix $x_0\in D_s$ such that $d_\nu(x_0,q_{s'})$ is a maximum. By (\ref{compare_pi}) there exists 
a unique $\nu$-geodesic $\chi:[0,1]\to S^2$ from $q_{s'}$ to $x_0$. The 
inequality  (\ref{compare_pi}) also implies that  
$\chi$ may be extended to a 
 minimal geodesic $\ti\chi:[0,1+\ep]\to S^2$ for some small $\ep>0$. Thus the maximality of $d_\nu(x_0,q_{s'})$ implies 
 that $x_0\in\p D_s$.  

First assume that $x_0$ is in the interior of the arc  
$D_s\cap S_\nu(q_s,d_s)$.  
By the first variation formula and the maximality of 
$d_\nu(x_0,q_{s'})$, it follows from  
(\ref{compare_pi}) that $\chi'(1)$ is orthogonal to $D_s\cap S_\nu(q_s,d_s)$. 
Since $q_{s'}$ is 
distinct from $q_s$ and $(q_s)^*$, we have from Lemma \ref{sphere_radius_r} 
that the distance from 
$q_{s'}$ attains a strict minimum at $x_0$, which is a contradiction.  
Thus $x_0$ may not belong to the interior of $D_s\cap S_\nu(q_s,d_s)$.  
Since $q_{s'}$ is distinct from $q^s$ and $(q^s)^*$, we obtain similarly that 
$x_0$ may not belong to the interior of $D_s\cap S_\nu(q^s,d_s)$. We conclude 
that $x_0\in \{z,u\}$, hence $d_\nu(x,q_{s'})\le d_\nu(x_0,q_{s'})=d_{s'}$. As a consequence 
we have that $x\in \bar B_\nu(q_{s'},d_{s'})$. Similarly we show that $x\in \bar B_\nu(q^{s'},d_{s'})$, 
hence $x\in D_{s'}$. Lemma \ref{Ls} is proved. 
\end{proof}

\section{\bf Proof of Theorem A}

If $(N,\be):M\to (S^2,\nu)$ is a $C^1$ isometric immersion, by a 
$\be$-geodesic in $M$ we will mean a curve that 
locally minimizes length.  In [Mi], a metric ball $B_\be(p,r)\subset M$ was called a full geodesic disk $B_\be(p,r)\subset M$ if for any unit vector $v\in T_pM$ the geodesic starting at 
$v$ is defined on $[0,r]$. The fact that $N$ preserves length of curves implies easily 
that $N$ is injective on a full geodesic disk  $B_\be(p,r)$ if $0<r<\pi$, as well as on  
the image of any $\be$-geodesic of length smaller than $2\pi$. 
From now on, for simplicity we will use the expression \lq normal ball\rq \ instead of  \lq full geodesic disk\rq.

If $U\subset M-\p M$ is an open
set, such that $N|_U:U\to N(U)\subset S^2$ is injective and $U,N(U)$ are strongly convex 
with respect to $\be$, then $N|_U:\lf(U,d_{\be}|_{_{U\times U}}\rg)\to \lf(N(U),d_\nu|_{_{(N(U)\times N(U))}}\rg)$ is an isometry.
In particular $N:(M-\p M,\be)\to (S^2,\nu)$ is a local isometry.

From the non-compactness of $M$ and the Myers Theorem ([My]) it follows that the
metric $\be$ cannot be complete. Indeed, if $(M,\be)$ is complete and non-compact
there exist a minimal $\be$-geodesic $\g$ in $M$ with $\be$-length greater then $\pi$, which
would contradict the Myers Theorem for $N\circ\g$.

If $(M,\be)$ is some connected $C^2$ surface with compact boundary, let $\wi M$ be the metric completion of $(M,\be)$. We set $\de M=\wi M-M$. Assume 
that there exists some $C^1$ local isometry $N:(M,\be)\to (S^2,\nu)$. Since $N$ 
preserves length of curves, it is easy to see that $N$ maps Cauchy sequences in $M$ to Cauchy sequences in $S^2$, hence $N$ has a unique continuous extension
$\wi N:\wi M\to S^2$.  For some $X\subset M$ we denote by $\wi X$ the closure of $(X,d_\be|_{_{X\times X}})$ in $\wi M$. Unless otherwise stated we will consider on $X$ the topology
induced by $d_\be|_{_{X\times X}}$. Similarly, for $Y\subset S^2$ we will always consider the topology
induced by the inclusion in $S^2$.

The next definition is the natural extension of Definition 2 in [Mi] to a manifold with boundary.
\begin{definition}\label{concave} Consider a connected noncompact surface  $M$ 
and a $C^1$ local isometry $N:(M,\be)\to (S^2,\nu)$. We will say that  $\wi M$ is concave at some $q\in \de M$ if there exist $p\in S^2$ and  $\fr \pi 2<r< \pi$ such that:
\begin{enumerate}
\item $\wi N(q)\in S_\nu(p,r)$;
\item there exists $\ep>0$ such that $\lf(B_\nu\bl(\wi N(q),\ep\br)\cap B_\nu(p,r)\rg)=N(U)$, where $U$ is an  open set in $(M-\p M)$ such that $q\in \wi U$ and $\wi N$ is injective on $\wi U$.
\end{enumerate}
\end{definition}

\begin{lemma}\label{lemmaA} Let $\varphi:M\to \Bbb R^3$ be as 
in the statement of Theorem A. 
Then there is no point $q\in\de M$ 
at which $\wi M$  is concave.
\end{lemma}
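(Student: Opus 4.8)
The plan is to argue by contradiction and reduce the concavity to a clean picture on the Gauss image, then play the completeness of the induced metric $\al$ against the curvature bound $K\le-\kappa$. Suppose $\wi M$ is concave at some $q\in\de M$, with witnesses $p\in S^2$, $\fr\pi2<r<\pi$, $\ep>0$ and an open set $U\subset M-\p M$ such that $q\in\wi U$, $\wi N$ is injective on $\wi U$, and $N(U)=R:=B_\nu\bl(\wi N(q),\ep\br)\cap B_\nu(p,r)$. Since $N$ is a local isometry and $\wi N|_{\wi U}$ is injective, $\wi N$ carries $\wi U$ isometrically (for the induced length metrics) onto $\overline R$, sending $q$ to $\wi N(q)\in S_\nu(p,r)$ and the boundary arc $A:=S_\nu(p,r)\cap\overline{B_\nu(\wi N(q),\ep)}$ into $\de M$. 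Writing $p^*$ for the antipode of $p$, the inequality $r>\fr\pi2$ means $\pi-r<\fr\pi2$, so $R$ is exactly the exterior of the \emph{strongly convex} cap $B_\nu(p^*,\pi-r)$ near $\wi N(q)$; this is the precise sense in which the boundary $A$ is concave. The injectivity of $N$ on $\be$-geodesics of length $<2\pi$ guarantees that short $\nu$-geodesics into $R$ lift uniquely to $U$, so I may transport all constructions between $\overline R$ and $\wi U$ freely.

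Next I would locate $A$ at $\al$-infinity. Because $\al$ is complete on $M-\p M$ and $K<0$ makes $N$ a local diffeomorphism there, any $\be$-rectifiable curve in $M$ whose $\be$-closure meets $\de M$ must have infinite $\al$-length: otherwise it would $\al$-converge to an interior point, near which $\be$ is a genuine Riemannian metric, forcing $\be$-convergence inside $M$ and contradicting $q\in\de M$. Lifting the radial $\nu$-geodesics issuing from $p^*$ (the meridians, $\nu$-orthogonal to the circles $S_\nu(p^*,c)$) and using the shape-operator relation $\be=\al(S\,\cdot\,,S\,\cdot\,)$, one finds that along the approach to $A$ the principal curvature in the radial direction tends to $0$, whence by $K\le-\kappa$ the transverse one blows up. Consequently the level arcs $\wi C_c:=N^{-1}\bl(S_\nu(p^*,c)\cap R\br)$ have $\al$-length tending to $0$ as $c\downarrow\pi-r$, while the radial $\al$-distance out to $A$ is infinite: the end of $M$ along $A$ is \emph{cusp-like} in $\al$, of finite total curvature $\int_U|K|\,dA_\al=A_\nu(R)<\infty$.

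The heart of the matter, and the step I expect to be the main obstacle, is to turn the concave sign $r>\fr\pi2$ into a contradiction. Transplanting $\al$ to a metric $\hat\al$ on $R$ through the chart $N|_U$, I would run a Gauss--Bonnet (equivalently, Jacobi-field comparison) argument on the region $\Omega_c\subset R$ bounded by $\wi C_c$, a fixed far arc $\wi C_{c_0}$, and two radial geodesics. As $c\downarrow\pi-r$ the curvature integral $\int_{\Omega_c}K\,dA_{\hat\al}$ converges to a finite value, the $\al$-length of $\wi C_c$ tends to $0$, and the turning of the boundary is governed by the $\nu$-geodesic curvature $\cot c$ of the circles $S_\nu(p^*,c)$; the sign $\cot(\pi-r)>0$ forced by $r>\fr\pi2$ (the cap being convex) fixes the turning so that the Gauss--Bonnet balance produces a definite-sign defect that cannot be absorbed by a metric with $K\le-\kappa$ along an end of infinite $\al$-length. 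The delicate points are the control of the $\al$-geodesic curvature of the radials (which are $\be$-geodesics but not $\al$-geodesics) as their $\al$-length diverges, and the bookkeeping of the angles between radials and level arcs while one principal curvature vanishes and the other blows up, so that the definite sign survives the limit and distinguishes $r>\fr\pi2$ from the admissible case $r\le\fr\pi2$. Throughout, one fixes $q$ and $U$ deep in the interior, keeping every lifted geodesic short in $\be$ and far in $\be$-distance from the compact boundary $\p M$, so that---as the paper stresses---no part of the construction ever meets $\p M$.
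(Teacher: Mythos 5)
There is a genuine gap, and it sits exactly at the point you yourself flag as ``the main obstacle.'' First, for context: the paper does not reprove this lemma at all. Its proof is the single observation that Lemma A of [Mi] applies verbatim, because every argument in Milnor's proof takes place inside the neighborhood $U$ of the concave point $q$ from Definition \ref{concave}, and hence never sees the (far away, compact) boundary $\p M$. That lemma of Milnor is the analytic core of Efimov's theorem, and its proof occupies a large part of [Mi]. Your setup is consistent with this: the reduction to the lens region $R=B_\nu\bigl(\wi N(q),\ep\bigr)\cap B_\nu(p,r)$ is correct (modulo the over-claim that the whole arc $A$ lifts into $\de M$ --- the definition only puts $q$ itself there), and your observation that completeness of $\al$ forces any curve whose $\be$-closure meets $\de M$ to have infinite $\al$-length is sound; that is indeed the one place completeness enters.

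The missing part is the only hard part: deriving a contradiction from the strict inequality $r>\frac\pi2$. The steps you sketch for it do not hold as stated. (a) From ``radial curves have finite $\be$-length but infinite $\al$-length'' you get only an integrated smallness of $|S\gamma'|_\al/|\gamma'|_\al$ along radials; since the shape operator $S$ need not be diagonal in the radial/level frame, this gives neither a pointwise statement that ``the principal curvature in the radial direction tends to $0$,'' nor the blow-up of $S$ on the transverse direction, nor the conclusion that the level arcs $\wi C_c$ have $\al$-length tending to $0$. (b) Geodesic curvature is metric-dependent: in the Gauss--Bonnet balance for $\hat\al$ on $\Omega_c$, the turning of the level arcs is their $\hat\al$-geodesic curvature, which is \emph{not} $\cot c$; relating it to the spherical one involves precisely the uncontrolled shape operator, and the radials are $\be$-geodesics, not $\hat\al$-geodesics, so their contribution is equally uncontrolled. (c) Any correct argument must degenerate exactly at $r=\frac\pi2$ (the cusp end of the pseudosphere is complete, has $K=-1$ and finite total curvature, and its Gauss image is bounded by a great circle), yet nothing in your bookkeeping exhibits a quantitative mechanism by which strict concavity survives the limit while the borderline case does not. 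In short, what you have is the (correct) frame of Milnor's Lemma A plus a plan for its crux that is not carried out; and the fact that Efimov's theorem resisted exactly this kind of Gauss--Bonnet/comparison attack for decades is strong evidence that the sign-defect argument cannot be completed in the form proposed. To repair the write-up within the spirit of the paper, you should either invoke [Mi, Lemma A] directly (checking, as the paper does, that its proof is local around $q$), or reproduce Milnor's length estimates in full.
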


The proof of Lemma \ref{lemmaA} is exactly the same as in the proof of Lemma A in [Mi],
since all arguments there refer to a neighborhood of the end point
$q$ as in Definition \ref{concave}. Lemma \ref{lemmaA} is the unique result in this paper where the completeness of 
$(M,\al)$ is needed. We observe that the hypothesis that the total curvature of 
$M$ is infinite is not used to prove Lemma \ref{lemmaA}.

The following lemma 
will be used several times in this paper. Compare condition
(\ref{partition})-(b)  in Lemma \ref{general_lemma} with Definition \ref{concave} above.

\begin{lemma}\label{general_lemma} Consider a connected surface  $M$ and a $C^1$ local isometry $N:(M,\be)\to (S^2,\nu)$ such that there exists no point $q\in\de M$ 
at which $\wi M$ is concave. Fix $w\in M-\p M$ and a compact domain $F\subset M-\p M$ containing $w$ such that $N|_F$ is injective. Assume
that there exist compact domains $\hat F_t\subset S^2$,
for $s_0\le t< T$ satisfying the following conditions:
\begin{enumerate}
\item \label{F0}$\hat F_{s_0}=N(F)$;
\item \label{monotone}if $s_0\le t\le s< T$ then $\hat F_t\subset\hat F_{s}$;
\item \label{deformation} there exists a continuous deformation $s_0\le t< T\longmapsto \si_t:S^1\to S^2$ such that
the boundary $\p \hat F_t=\si_t(S^1)$, where $\si_t$ is a piecewise smooth simple closed curve with internal angles with respect to $\hat F_{t}$ different from $0$ and $2\pi$;
\item  \label{partition} for any $s_0< t< T$ and any $z\in \p\hat F_t$, one of the following assumptions hold:
\begin{enumerate}
\item $z\in \hat F_{s_0}=N(F)$;
\item there exist $p\in S^2$ and $\fr \pi 2<r<\pi$
such that there exists a nonempty open arc $C\subset \bl(\p\hat F_t\,\cap\, S_\nu(p,r)\br)$ containing $z$, and  there exists  $\ep>0$
such that $\lf(B_\nu(z,\ep)\,\cap\, \inte\bl(\hat F_t\br)\rg)\subset
B_\nu(p,r)$;
\end{enumerate}
\item \label{star_shaped} in the case that $\p M\not=\emptyset$, it holds that,  
for each  $s_0< t< T$ and any $z\in \lf(\p\hat F_{t}\,\cap\,\bl(S^2-N(F)\br)\rg)$, there
exists a piecewise smooth curve $\g_{zt}:[0,1]\to \hat F_t$ joining $N(w)$ to $z$ with $L_\nu(\g_{zt})
<d_\be(w,\p M)$.
\end{enumerate}
Then there exists a connected set $U\subset M-\p M$ containing $F$ such that $N|_U:U\to \bigcup_{s_0\le t<T}
\hat F_t$ is a 
bijection.
\end{lemma}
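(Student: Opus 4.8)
The plan is to lift the nested family $\{\hat F_t\}_{s_0\le t<T}$ through the local isometry $N$ by a continuity argument in the parameter $t$. Define
\[
\Lambda=\bigl\{\tau\in[s_0,T):\ \text{there is a compact connected }U_\tau\subset M-\p M\text{ with }F\subset U_\tau\text{ and }N|_{U_\tau}\text{ bijective onto }\hat F_\tau\bigr\},
\]
and arrange the $U_\tau$ to be nested. Since $N|_F$ is injective and $\hat F_{s_0}=N(F)$ by condition (\ref{F0}), we have $s_0\in\Lambda$ with $U_{s_0}=F$, so $\Lambda\neq\emptyset$; set $\tau^*=\sup\Lambda$. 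On the compact set $U_\tau$ the continuous bijection $N$ is a homeomorphism, so its inverse $g_\tau=(N|_{U_\tau})^{-1}\colon\hat F_\tau\to M-\p M$ is a continuous lift of the inclusion $\hat F_\tau\hookrightarrow S^2$. The goal is to prove $\tau^*=T$ and that $U=\bigcup_{\tau<T}U_\tau$ realizes the desired bijection onto $\bigcup_{s_0\le t<T}\hat F_t$.

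First I would prove that $\Lambda$ is relatively open. Fix $\tau\in\Lambda$. Every $z\in\p\hat F_\tau$ lifts to $\hat z=g_\tau(z)\in U_\tau\subset M-\p M$, where $N$ is a local diffeomorphism (here $K<0$ is used); hence $g_\tau$ extends to a neighborhood of $z$ in $S^2$. By compactness of $\p\hat F_\tau=\si_\tau(S^1)$ and the continuity of the deformation in condition (\ref{deformation}), these local extensions can be assembled into a lift of $\hat F_t$ for all $t\in[\tau,\tau+\delta)$. The monotonicity (\ref{monotone}) guarantees that the added collar $\hat F_t\setminus\inte(\hat F_\tau)$ consists of genuinely new points, and local injectivity of $N$ on small normal balls, together with the convexity supplied by condition (\ref{partition})-(b) and Lemma \ref{sphere_radius_r}, prevents the lifted collar from folding back onto $U_\tau$ or onto itself. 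Thus the extended lift stays injective and $[\tau,\tau+\delta)\subset\Lambda$.

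The crux is the closed/limit step, where the hypotheses come together. Suppose $\tau^*<T$ and take $\tau_n\uparrow\tau^*$ in $\Lambda$ with nested lifts $U_{\tau_n}$. The union $\bigcup_n U_{\tau_n}$ maps bijectively, via $g^*=\lim_n g_{\tau_n}$, onto $\bigcup_n\hat F_{\tau_n}$, which equals $\hat F_{\tau^*}$ by (\ref{monotone}) and the continuity of $\si_t$. I must rule out that some boundary point $z\in\p\hat F_{\tau^*}$ has its limit $q=g^*(z)$ in the metric boundary $\de M$. If $q\in\de M$, then condition (\ref{partition})-(b) furnishes $p\in S^2$ and $\pi/2<r<\pi$ with $\wi N(q)\in S_\nu(p,r)$ and the interior of $\hat F_{\tau^*}$ lying in $B_\nu(p,r)$ near $z$; this is exactly the configuration of Definition \ref{concave}, so $\wi M$ would be concave at $q$, contradicting the hypothesis. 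When $\p M\neq\emptyset$, condition (\ref{star_shaped}) provides curves from $N(w)$ to each such $z$ of $\nu$-length $<d_\be(w,\p M)$, whose lifts keep $q$ at $\be$-distance $<d_\be(w,\p M)$ from $w$, so $q\notin\p M$. Hence $q\in M-\p M$ and $\tau^*\in\Lambda$.

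Combining the two steps forces $\tau^*=T$: otherwise $\tau^*\in\Lambda$ by the limit step, and openness would push $\Lambda$ past $\tau^*$, contradicting the supremum. Setting $U=\bigcup_{\tau<T}U_\tau$, which is connected and contains $F$, the map $N|_U$ is then a bijection onto $\bigcup_{s_0\le t<T}\hat F_t$, proving the lemma. I expect the main obstacle to be exactly this closed step — namely, verifying that the combination of the no-concave-point hypothesis with conditions (\ref{partition})-(b) and (\ref{star_shaped}) really forbids the continuation from terminating at a point of $\de M$ or $\p M$, so that the lift survives all the way to $t=T$.
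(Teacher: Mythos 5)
Your overall strategy coincides with the paper's: define the set of parameters for which a nested lift exists (your $\Lambda$, the paper's $Z$), show it is nonempty, that it is "open'' (via the standard fact that a local homeomorphism injective on a compact set is injective on an open neighborhood of it), and that if $\tau^*=\sup\Lambda<T$ then the nested lifts limit onto a lift of $\hat F_{\tau^*}$, with the no-concavity hypothesis and condition (\ref{star_shaped}) ruling out termination at $\de M$ or at $\p M$. So the architecture, and the role you assign to each hypothesis, match the paper's proof.

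There is, however, a genuine gap at exactly the point you flag as the crux. To conclude that $\wi M$ is concave at the limit point $q\in\de M$, Definition \ref{concave} does not only require the metric-ball configuration near $\wi N(q)$ supplied by condition (\ref{partition})-(b): it requires an open set $U\subset M-\p M$ with $q\in\wi U$, $N(U)=\bl(B_\nu(\wi N(q),\ep)\cap B_\nu(p,r)\br)$, and, crucially, $\wi N$ \emph{injective on} $\wi U$. Injectivity of $N$ on the lift $V=\bigcup_n U_{\tau_n}$ does not formally imply injectivity of the continuous extension $\wi N$ on the closure $\wi V$ in the metric completion: two distinct points of $\de M\cap \wi V$ could a priori have the same image. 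The paper devotes Claim \ref{not5a} to precisely this, and the argument is not automatic: given $x,y\in\wi V$ with $\wi N(x)=\wi N(y)$, it approximates them by $x_n,y_n\in V$ and invokes Lemma \ref{topologies} (equivalence on $\hat F_{t_0}$ of the distance through interior paths with $d_\nu$ --- this is where the requirement in condition (\ref{deformation}) that internal angles differ from $0$ and $2\pi$ is actually used) to produce curves from $N(x_n)$ to $N(y_n)$ of $\nu$-length tending to $0$ lying in $\inte\bl(\hat F_{t_0}\br)\subset N(V)$; lifting them gives $d_\be(x_n,y_n)\to 0$, hence $x=y$. Without this step your appeal to the no-concavity hypothesis does not go through. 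A smaller, repairable inaccuracy: $\bigcup_n\hat F_{\tau_n}$ is not $\hat F_{\tau^*}$ but only contains its interior, so the very existence of the limit $q=g^*(z)$ for $z\in\p\hat F_{\tau^*}$ is not available by definition; it needs the Cauchy-sequence argument along a geodesic entering $\inte\bl(\hat F_{\tau^*}\br)$ (the paper's Claim \ref{surjwiV}), after which well-definedness again rests on the injectivity statement above.
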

\begin{proof}
Let $Z$ be the set of numbers $t\in[s_0,T)$
such that there exists a family $(F_s)_{s_0\le s\le t}$ of compact sets  contained in $M-\p M$ such that $N|_{F_{s}}:F_{s}\to \hat F_{s}$ is a bijection for any $s_0\le s\le t$ and $F=F_{s_0}\subset F_{u}\subset F_v$, if $s_0\le u\le v\le t$. By condition (\ref{F0}) 
we see easily that $s_0\in Z$. If $\sup(Z)=T$, the local 
$C^1$ diffeomorphism $N$ is injective on $U=\bigcup_{s_0\le t<T}F_t$, hence 
$N|_U:U\to N(U)$ is a homeomorphism. Since the set $N(U)
=\bigcup_{s_0\le t<T}\hat F_t$ is 
connected, we conclude that $U$ is a connected set. Thus, to prove Lemma 
\ref{general_lemma} it suffices to show that $\sup(Z)=T$. 
 
We assume by  contradiction that $t_0=\sup(Z)<T$. First we prove the following
\begin{claim} \label{t0Z}$t_0\notin Z$ and $s_0<t_0$.
\end{claim}
In fact, if $t_0\in Z$ then for each $s_0\le t\le t_0$ the map $N|_{F_{t}}:F_{t}\to \hat F_{t}$ is
injective and a local $C^1$ diffeomorphism, hence it is a homeomorphism. Furthermore it holds that $F=F_{s_0}\subset F_{u}\subset F_v\subset (M-\p M)$, if $s_0\le u\le v \le t_0$. Since
$F_{t_0}$  is compact and $N$ is a local diffeomorphism which is injective on $F_{t_0}$, it is easy to see that there exists an open set $W\subset (M-\p M)$ containing $F_{t_0}$ such that
$N|_W:W\to S^2$ is injective. By condition (\ref{deformation}) in Lemma \ref{general_lemma},
the map $t\longmapsto \si_t$ is continuous, hence the compactness of $\p \hat F_{t_0}$ 
implies that there exists $t_0<s<T$ such
that for any $t_0<t\le s$ it holds that $\hat F_t$ is contained in the open set $N(W)$, hence we may set  $F_t=
(N|_W)^{-1}(\hat F_{t})$, obtaining that
$N|_{F_{t}}:F_{t}\to \hat F_{t}$ is a homeomorphism. In particular we have that each $F_{t}$ is a compact set
and by condition (\ref{monotone}) in Lemma \ref{general_lemma} the family $(F_t)_{s_0\le t\le s}$ satisfies that $F=F_{s_0}\subset F_{u}\subset F_v\subset W\subset (M-\p M)$, if $s_0\le u\le v\le t$. We conclude that $s\in Z$, which contradicts the fact that $t_0=\sup(Z)$. Since we proved that $t_0\notin Z$ and $s_0\in Z$ we have  that $s_0<t_0$. Claim \ref{t0Z} is proved.

Set $V=\bigcup_{s_0\le t<t_0}F_t$. Since $F_u\subset F_v$ if $u\le v$, it is easy to see that $N|_V:V\to N(V)$ is injective, hence it is a homeomorphism.
We
have that
$$N(V)=N\lf(\bigcup_{s_0\le t<t_0}F_t\rg)=\bigcup_{s_0\le t<t_0}\hat F_t\subset \hat F_{t_0}.
$$
Take $x\in
\inte(\hat F_{t_0})=\hat F_{t_0}-\p\hat F_{t_0}$. By compactness we have that  $d=d_\nu(x,\p \hat F_{t_0})>0$. By compactness again and using the continuous deformation $t\longmapsto \si_t$,
we obtain that there exists $s_0<s<t_0$ such that $\hat F_{t_0}-\hat F_s\subset
B_\nu\bl(\p \hat F_{t_0},d\br)$, hence $x\in \hat F_s$.  Thus we obtain that
\begin{equation}\label{intft0}\inte\bl(\hat F_{t_0}\br)\subset N(V) \mbox{\ \  \ \ \ \ and\ \ \ \ \ }\hat F_{t_0}-N(V)\,\subset \,\hat F_{t_0}-\inte\bl(\hat F_{t_0}\br)=\p\hat F_{t_0}.
\end{equation}

\begin{claim} \label{not5a} $\wi N|_{\wi V}:\wi V\to S^2$ is injective.
\end{claim}
Fix $x,y\in\wi V$ such that $\wi N(x)=\wi N(y)$. Take sequences $x_n,y_n\in V$ such that $x_n\to x$ and $y_n\to y$ in $\wi M$.
Since $N(x_n)$ and $N(y_n)$ converge to $\wi N(x)=\wi N(y)$ in $S^2$, we have that $\wi N(x)\in \hat F_{t_0}$.
Since $N(x_n)$ and $N(y_n)$ belong to $\hat F_{t_0}$ and converge to $\wi N(x)$, Lemma \ref{topologies}
implies that $d_{\inte}(N(x_n),N(y_n))\to 0$, hence there exists a sequence of piecewise smooth
curves $\ti \g_n:[0,1]\to \hat F_{t_0}$ joining $N(x_n)$ to $N(y_n)$, with $L_\nu(\ti\g_n)\to 0$ and $\ti\g_n((0,1))\subset \inte(\hat F_{t_0})\subset N(V)$, where we used (\ref{intft0}) 
above. As a consequence we have that $\ti\g_n([0,1])\subset N(V)$. The curve  $\g_n=(N|_V)^{-1}
\circ \ti\g_n:[0,1]\to V$ joins $x_n$ to $y_n$ and satisfies $L_\be(\g_n)=L_\nu(\ti\g_n)\to 0$.
In particular we have that $d_\be(x_n,y_n)\to 0$, hence $x=y$. Claim \ref{not5a} is
proved.

\begin{claim} \label{notF}$\wi V\subset M$.
\end{claim}

In fact, if this is not true,  there exists $q\in\de M\cap\wi V$. Since $q\notin V$, Claim \ref{not5a} implies that $\wi N(q)\notin N(V)$. Thus (\ref{intft0}) above implies  that
$$\wi N(q)\in \lf(\hat F_{t_0}-N(V)\rg)\subset \p\hat F_{t_0}.$$

Since $\wi N(q)\notin N(V)$, the point $\wi N(q)$ does not satisfy condition (\ref{partition})-(a) in
Lemma \ref{general_lemma}. By condition (\ref{partition})-(b) in Lemma \ref{general_lemma} there exist $p\in S^2$, $\fr \pi 2<r<\pi$
and $\ep>0$ such that
$$\lf(B_\nu\bl(\wi N(q),\ep\br)\,\cap\, \inte\bl(\hat F_{t_0}\br)\rg)\subset
B_\nu(p,r).$$
Again by condition (\ref{partition})-(b) in Lemma \ref{general_lemma} the boundaries of $\bar B_\nu(p,r)$ and $\hat F_{t_0}$ coincide in a neighborhood
of $\wi N(q)$. More precisely, there exists a nonempty open arc $C$ satisfying
$$\wi N(q)\in C\subset
\lf(\p \hat F_{t_0}\cap S_\nu(p,r)\rg).$$
Thus we obtain that for some $\ep'>0$
sufficiently small it holds that
$$\Omega=\lf(B\bl(\wi N(q),\ep'\br)\,\cap\, \inte\bl(\hat F_{t_0}\br)\rg)=
\lf(B\bl(\wi N(q),\ep'\br)\,\cap\, B_\nu(p,r)\rg).$$

By (\ref{intft0}) above we have that $\Omega\subset \inte\bl(\hat F_{t_0}\br)\subset N(V)$.
We consider the open set $U=(N|_V)^{-1}(\Omega)$. Since $\wi U\subset \wi V$ we obtain
from Claim \ref{not5a} that $\wi N$ is injective on $\wi U$. Consider a sequence 
$q_n\in V$ converging to $q$. For sufficiently large $n$ we have that $N(q_n)\in \Omega$, 
hence $q_n=(N|_V)^{-1}(N(q_n))\in U$, hence $q\in\wi U$. We
conclude that $\wi M$ is concave at $q$, which contradicts
our hypotheses. Claim \ref{notF} is proved.

\begin{claim} \label{surjwiV} $\wi N|_{\wi V}:\wi V\to \hat F_{t_0}$ is a bijection.
\end{claim}
Indeed, by (\ref{intft0}) above we have that $\inte\bl(\hat F_{t_0}\br)\subset N(V)$. 
Since Claim \ref{not5a} holds, in order to show Claim \ref{surjwiV}  it suffices to prove that any point in
$\p \hat F_{t_0}$ is in the image of $N|_{\wi V}$. By Condition (\ref{deformation}) 
in the statement of Lemma \ref{general_lemma},  given $z\in \p \hat F_{t_0}$ there exists 
a unit vector $v\in T_z(S^2)$ pointing to
$\inte(\hat F_{t_0})$. We consider the unit speed $\nu$-geodesic $\ti\g:[0,\eta]\to \hat F_{t_0}$
given by $\ti\g(t)=\exp_z(\eta-t)v$, for some small $\eta>0$ such that $\ti\g([0,\eta))
\subset \inte\bl(\hat F_{t_0}\br)$. Set $\g:[0,\eta)\to
V$ given by $\g=(N|_V)^{-1}\circ\bl(\ti\g|_{[0,\eta)}\br)$. Take a
sequence $t_n\in [0,\eta)$ converging to $\eta$. Since $\g(t_n)$ is a Cauchy
sequence in $M$, it converges to some $q\in \wi V$ and $\wi N(q)=\lim N(\g(t_n))=
\lim\ti\g(t_n)=z$. Claim \ref{surjwiV}
is proved.

\begin{claim} \label{not_bound}$\wi V\subset M-\p M$.
\end{claim}
 By Claim \ref{notF}, there is nothing to prove if $\p M=\emptyset$. 
 Thus we will assume that $\p M\not=\emptyset$. 
 Fix $q\in \wi V$. We know by Claim \ref{notF} that 
 $q\in M$. If $q\in F$, we have by hypothesis that $q\notin\p M$. If 
$q\notin F$, Claim \ref{not5a} implies that $N(q)\notin N(F)=\hat F_{s_0}$. 
Thus condition (\ref{deformation}) in Lemma \ref{general_lemma} implies 
that there exists some $s_0< t\le t_0$ such 
that $N(q)\in \p\hat F_t$, hence $N(q)\in \lf(\p\hat F_t\cap \bl(S^2-N(F)\br)\rg)$. Since $N:\wi V\to \hat F_{t_0}$ is a homeomorphism which preserves the length of curves,
it follows from condition (\ref{star_shaped}) in Lemma \ref{general_lemma} that 
$$d_\be(w,\p M)>L_\nu(\g_{_{N(q)t}})=L_\be\lf(\bl(N|_{\wi V}\br)^{-1}\circ \g_{_{N(q)t}}\rg)\ge d_\be(w,q).$$ 
Thus we have that
$$d_\be(q,\p M)\ge d_\be(w,\p M)-d_\be(w,q)>0,$$
which implies that $q\in  M-\p M$. Claim \ref{not_bound} is proved.

Now we may get a contradiction and prove Lemma \ref{general_lemma}. Set $F_{t_0}=\wi V$. We have that $F\subset F_u\subset F_v\subset M-\p M$ for
$s_0\le u\le v\le t_0$. By Claim \ref{surjwiV} the map 
$N|_{F_{t_0}}:F_{t_0}\to \hat F_{t_0}$ is a homeomorphism, hence the set $F_{t_0}$ is
compact. We conclude that $t_0\in Z$, which contradicts Claim \ref{t0Z} and proves Lemma \ref{general_lemma}.
\end{proof}

\begin{lemma} \label{geodesicpi} Consider a connected noncompact surface  $M$ and a 
$C^1$ local isometry $N:(M,\be)\to (S^2,\nu)$ such that there exists no point $q\in\de M$ 
at which $\wi M$ is concave. Fix $w\in M-\p M$. If $\p M\not=\emptyset$ assume further that $d_\be(w,\p M)>\pi$. 
Then does not exist
any unit speed $\be$-geodesic $\g:\lf[0,\pi\rg]\to M$ with $\g\lf(\fr\pi 2\rg)=w$. 
\end{lemma}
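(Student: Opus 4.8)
The plan is to argue by contradiction, develop the forbidden geodesic onto a half great circle, and then use Lemma \ref{general_lemma} to lift an exhausting family of spherical disks until the lift closes up into a compact surface, contradicting the non-compactness of $M$. So suppose there is a unit speed $\be$-geodesic $\g:[0,\pi]\to M$ with $\g\lf(\fr\pi2\rg)=w$. Since $N:(M,\be)\to(S^2,\nu)$ is a local isometry, $c:=N\circ\g$ is a unit speed $\nu$-geodesic of length $\pi$, i.e.\ a half great circle, so its endpoints $z:=N(\g(0))$ and $u:=N(\g(\pi))$ are antipodal, with midpoint $N(w)$. Moreover $\g$ is length-minimizing in $M$ (a shorter competitor would develop to a path in $S^2$ between the antipodal points $z,u$ shorter than $\pi$), so $d_\be(\g(0),\g(\pi))=\pi$. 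Note $\g$ stays within $\be$-distance $\fr\pi2<d_\be(w,\p M)$ of $w$, hence $\g\bl([0,\pi]\br)\subset M-\p M$.

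I would apply Lemma \ref{general_lemma} keeping the fixed basepoint equal to $w$ (so the relevant bound in condition (\ref{star_shaped}) is $d_\be(w,\p M)>\pi$), but centering the family of images at $z$: take $\hat F_t=\bar B_\nu(z,t)$ for $t\in[s_0,\pi)$, with $s_0>\fr\pi2$. For $t>\fr\pi2$ the bounding circle $S_\nu(z,t)$ has radius exceeding $\fr\pi2$, so at every boundary point condition (\ref{partition})-(b) holds with $p=z$ and $r=t$; conditions (\ref{monotone}) and (\ref{deformation}) are clear from the concentric circles. For condition (\ref{star_shaped}), any $\zeta\in\p\hat F_t$ with $t\in(\fr\pi2,\pi)$ is distinct from the antipode of $N(w)$, so the $\nu$-geodesic from $N(w)$ to $\zeta$ has length $<\pi<d_\be(w,\p M)$ and can be taken inside $\hat F_t$; this is exactly where the hypothesis $d_\be(w,\p M)>\pi$ is used (and the condition is vacuous when $\p M=\emptyset$). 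The first real difficulty is condition (\ref{F0}): I must produce a compact seed $F\subset M-\p M$ with $w\in F$, $N|_F$ injective and $N(F)=\bar B_\nu(z,s_0)$. Here I would start from a region I \emph{can} lift directly, namely a strongly convex stadium around the arc $c$ of the type $Y(X)$ (built from $\g\bl([0,s]\br)$ and two small $\nu$-balls), whose lift exists because $N$ is injective on $\be$-geodesics of length $<2\pi$, and then enlarge it outward; the boundary points that have not yet moved lie in $N(F)$ and fall under (\ref{partition})-(a), while the enlarged, concave part falls under (\ref{partition})-(b). Passing from the convex seed through the sub-hemispheric radii (where the boundary is convex and (\ref{partition})-(b) fails) to a genuinely concave disk of radius $s_0>\fr\pi2$ is the main bookkeeping obstacle.

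Granting the seed, Lemma \ref{general_lemma} yields a connected $U\subset M-\p M$ with $N|_U:U\to\bigcup_{s_0\le t<\pi}\hat F_t=B_\nu(z,\pi)=S^2-\{u\}$ a bijection, so $(U,\be)$ is isometric to the once-punctured round sphere and its metric completion $\wi U$ is isometric via $\wi N$ to $S^2$, adding a single point $q$ over $u$. The crux is then to identify $q$ with the genuine point $\g(\pi)\in M-\p M$: since $d_\be(\g(0),\g(\pi))=\pi$ and $N(\g(\pi))=u$, the tail $\g|_{(\fr\pi2,\pi]}$ develops inside $U$ and converges in $\wi U$ to $q$ while converging in $M$ to $\g(\pi)$, and near $u$ the intrinsic geometry of the punctured sphere forces all approaches to coincide; this is precisely where the length-$\pi$ geodesic (rather than the mere absence of concave points) is indispensable, because it supplies the honest interior realization of the puncture. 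With $q=\g(\pi)\in M-\p M$, the set $\hat U=U\cup\{\g(\pi)\}$ satisfies that $N|_{\hat U}:\hat U\to S^2$ is a homeomorphism, being a local diffeomorphism at the interior point $\g(\pi)$ and already bijective onto $S^2-\{u\}$ on $U$. Thus $\hat U$ is compact and homeomorphic to $S^2$, while it is also open in $M$; by connectedness $\hat U=M$, contradicting the non-compactness of $M$. I expect the decisive step to be this last identification $q=\g(\pi)\in M-\p M$: without it, $U\cong S^2-\{u\}$ is a perfectly admissible non-compact lift and no contradiction arises, so the whole argument hinges on the length-$\pi$ geodesic closing the developed sphere at a true interior point.
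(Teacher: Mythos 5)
Your overall skeleton is the same as the paper's: develop the hypothetical geodesic onto a half great circle, use Lemma \ref{general_lemma} to lift an increasing family of domains exhausting $S^2$ minus one point, and then derive a contradiction from the resulting bijection $N|_U:U\to S^2-\{u\}$. Your minimality observation ($d_\be(\g(0),\g(\pi))=\pi$ because $z,u$ are antipodal) and your endgame are fine in spirit; in fact the paper's endgame is slightly different and does not need to identify the puncture $q$ with $\g(\pi)$: it splits into the case $q\in M$ (then $\wi U$ is compact and equals $M$, contradicting non-compactness) and the case $q\in\de M$ (then $\wi M$ is concave at $q$, contradicting the hypothesis). But there is a genuine gap exactly where you flag ``the main bookkeeping obstacle,'' and it is not bookkeeping: your family $\hat F_t=\bar B_\nu(z,t)$ centered at the \emph{endpoint} $z$ cannot be seeded. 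Condition (\ref{partition})-(b) of Lemma \ref{general_lemma} requires every moving boundary point to lie on an arc of a circle $S_\nu(p,r)$ with $\fr\pi 2<r<\pi$ with the interior of $\hat F_t$ locally \emph{inside} $B_\nu(p,r)$, i.e.\ the moving boundary must be concave. For $t\le\fr\pi 2$ the circle $S_\nu(z,t)$ is convex from inside (and you cannot reinterpret it as $S_\nu(z^*,\pi-t)$, because then the interior lies locally outside $B_\nu(z^*,\pi-t)$), so no family of the lemma's type can cross the hemispherical radius. Hence you are forced to start at $s_0>\fr\pi 2$, and the required seed --- a compact $F\subset M-\p M$ with $N|_F$ injective and $N(F)=\bar B_\nu(z,s_0)$ a super-hemispherical cap --- is precisely what none of the available tools produce: injectivity of $N$ is only known on geodesics of length $<2\pi$, on small normal balls, and on thin tubes, and the stadium $Y(X)$ requires $s+2r<\pi$, so it is always convex and again cannot be grown by the lemma. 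Producing that seed \emph{is} the content of the lemma you are trying to prove.

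The paper's construction resolves this by centering at the midpoint $N(w)$ and starting not from a ball but from the $\de$-tube $F=\bar B_\be(\g([0,\pi]),\de)$ around the whole geodesic, which is directly liftable. It then \emph{rotates} the half great circle $\ti\g=N\circ\g$ about its two (antipodal!) endpoints, setting $\hat F_t=\bigcup_{|s|\le t}\bar B_\nu(\ti\g_s([0,\pi]),\de)$ for $0\le t\le\fr\pi 2$. Because each $\ti\g_s$ is half of a great circle, the moving boundary arcs lie on circles of radius exactly $\fr\pi2+\de>\fr\pi2$ centered at the poles $p_{\pm t}$ of the rotated great circles, so condition (\ref{partition})-(b) holds, while the two end-cap arcs stay inside $N(F)$ and fall under (\ref{partition})-(a). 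At $t=\fr\pi2$ this family has swept out $\bar B_\nu(N(w),\fr\pi2+\de)$, and only then does concentric growth (your mechanism, but centered at $N(w)$) take over up to radius $\pi$. So the length-$\pi$ geodesic is indispensable already in the construction of the family --- its antipodal endpoints are what make the rotating tube boundaries concave --- and not only in the final identification, as you suggest. A secondary error: your verification of condition (\ref{star_shaped}) is incorrect as stated, since the minimal $\nu$-geodesic from $N(w)$ to a point $\zeta\in S_\nu(z,t)$ with $t>\fr\pi2$ need not stay inside $\bar B_\nu(z,t)$ (when $\zeta$ is nearly antipodal to $N(w)$ it can cut through the cap around $z^*$); this is fixable by a tangent-segment-plus-boundary-arc path of length $<\pi$, but it is another sign that the concentric-at-$z$ family fights the geometry that the paper's rotation family exploits.
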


\begin{proof}
Assume  by contradiction that such a geodesic $\g$ exists. Since $N:(M,\be)\to S^2$ is a 
$C^1$ local
 isometry we have that $N|_{\g([0,\pi])}$ is injective. By
 a compactness argument there exists a small 
 $0<\de<\fr\pi 2$ such that the set $F=\bar B_\be\bl(\g([0,\pi]),\de\br)$ 
 is a domain with $C^1$ boundary contained in $M-\p M$ and $N$ is injective 
 on an open set containing $F$, hence $N(F)$ is a domain in $S^2$ with $C^1$ boundary.  
 The number $\de$ may be chosen sufficiently small such that $N(F)=
 B_\nu\bl(\g([0,\pi]),\de\br)$. 
  
Now we rotate $\ti \g=(N\circ\g)$ in both directions fixing $\ti \g(0)$ and $\ti \g(\pi)$, obtaining
unit speed $\nu$-geodesics $\ti\g_s:[0,\pi]\to S^2$ such that $\ti \g_0=\ti\g$ and the angle $\measuredangle(\ti\g_s'(0),\ti\g_{-s}'(0))=2s$.

For $0\le t\le \fr\pi 2$, set
$$\hat F_t=\bigcup_{|s|\le t}\bar B_\nu(\ti\g_s([0,\pi]),\de)=
\bar B_\nu\lf(\Biggl(\bigcup_{|s|\le t}\ti\g_s([0,\pi])\Biggr),\de\rg).$$

Note that $\hat F_{\fr\pi 2} = \bar B_\nu\bl(N(w),\fr\pi 2+\de\br)$.
For $\fr \pi 2\le t<\pi-\de$ set $\hat F_t=\bar B_\nu\bigl(N(w),t+\de\bigr)$.

We claim that the collection of compact domains $(\hat F_t)_{0\le t<\pi-\de}$ satisfies the conditions
of Lemma \ref{general_lemma}. Note that $\hat F_0=\bar B_\nu\bl(\ti g\bl([0,\pi]),\de\br)\br)=N(F)$,
hence condition (\ref{F0}) in Lemma \ref{general_lemma} is satisfied. Conditions (\ref{monotone}) and (\ref{deformation}) in Lemma \ref{general_lemma} are also easily verified.

Now we will check that condition (\ref{partition}) in Lemma \ref{general_lemma} is
verified. If $0< t<\fr\pi 2$, the boundary $\p \hat F_t$ is a union of an arc in $S_\nu(\ti\g(0),\de)\subset N(F)$,
an arc in $S_\nu(\ti\g(\pi),\de)\subset N(F)$, whose points verify condition (\ref{partition})-(a) in
Lemma \ref{general_lemma}, an arc $C_{t}$ contained in $S_\nu\lf(p_{t},\fr \pi 2+\de\rg)$
for some $p_{t}\in S^2$,
such that $\inte(\hat F_t)$ and $B_\nu\lf(p_{t},\fr \pi 2+\de\rg)$ are locally on the same side
of $C_{t}$, and    an arc $C_{-t}$ contained in $S_\nu\lf(p_{-t},\fr \pi 2+\de\rg)$ for
some $p_{-t}\in S^2$
such that $\inte(\hat F_t)$ and $B_\nu\lf(p_{-t},\fr \pi 2+\de\rg)$ are locally on the same side
of $C_{-t}$. The points in $C_{t}$ and $C_{-t}$ verify condition (\ref{partition})-(b). More
precisely, let $E_t$ denote the equator containing the image of $\ti \g_t$ and
$H_{t}$ one of the two hemispheres determined by $E_t$ such that $\ti\g_{-t}\bl((0,\pi)\br)\subset H_{t}$. Take $p_{t}\in H_{t}$ such that $d_\nu(p_{t},E_t)=\fr \pi 2$.
 Similarly we obtain a point $p_{-t}$ by using the equator given by the geodesic $\ti \g_{-t}$.   
 If $\fr \pi 2\le t<\pi-\de$
 we have that $\p\hat F_t=S_\nu(N(w),t+\de)$ and thus condition (\ref{partition})-(b)
 is automatically verified.

 If $\p M\not=\emptyset$, let us check condition (\ref{star_shaped}) in Lemma \ref{general_lemma}. 
 We fix $0< t<\pi-
 \de$ and $z\in \p\hat F_t$ such that $z\notin N(F)$. It suffices to show that there exists a piecewise smooth curve $\g_{zt}:[0,1]\to
 \hat F_t$ from $N(w)=\ti\g(0)$ to $z$ with
 $L_\nu(\g_{zt})<\pi$. Assume first that $0<t<\fr\pi 2$. Since $\measuredangle\bl(\ti\g_t'(0),\ti\g_{-t}'(0)\br)=2t<\pi$, the 
 compact domain $D_t$ bounded by $\ti\g_t([0,\pi])$ and $\ti\g_{-t}([0,\pi])$ which contains  $\ti\g([0,\pi])$ is convex. Note that $\p \hat F_t=S_\nu(D_t,\de)$. Let $P(z)$ be the natural projection of $z$ onto $\p D_t$. Then we may construct a piecewise smooth curve $\g_{zt}$ joining $N(w)$ 
 to $z$, which first follows a minimal geodesic from $N(w)$ to $P(z)$, whose $\nu$-length 
is not greater then $\fr\pi 2$, by an argument 
similar as in the proof of Lemma \ref{Ls}, and then follows 
a minimal geodesic from $P(z)$ to $z$. We will have that $L_\nu(\g_{zt})\le\fr\pi 2+\de<\pi$. Finally we consider $\fr\pi 2\le t<\pi-\de$. Since $\p\hat F_t=S_\nu(N(w),t+\de)$,
 we define $\g_{zt}$ as a minimal geodesic from $N(w)$ to $z$, which satisfies 
 $L_\nu(\g_{zt})= t+\de<\pi$. We conclude that condition (\ref{star_shaped}) in 
 Lemma \ref{general_lemma} is satisfied.
  
 Thus we may use Lemma \ref{general_lemma} to obtain the existence of 
 a connected set $U\subset M-\p M$ containing $F$  
 such that $N|_U:U\to \bigcup_{0\le t<\pi-\de}\hat F_t=B_\nu\bl(N(w),\pi\br)=S^2-\{N(w)^*\}$ is a bijection.  
  By using the fact that 
$N$ is a $C^1$ local isometry we see that $N|_U$ is a homeomorphism which 
preserves lengths. Thus, by considering the isometry $dN_w:T_wM\to T_{N(w)}M$, we 
see that each unit speed $\be$-geodesic starting at $w$ will be defined at least on 
$[0,\pi)$. This implies that $B_\be(w,\pi)$ is a normal ball contained in $U$. Given a point $p\in U$, there exists a minimal $\nu$-geodesic 
$\si$ from $N(w)$ to $N(p)$ with $L_\nu(\si)<\pi$. Set $\g=(N|_U)^{-1}\,\circ\,\si$. 
We have that $L_\be(\g)=L_\nu(\si)<\pi$. Since $\g$ joins $w$ and $p$ we 
obtain that $d_\be(w,p)<\pi$, hence $U= B_\be(w,\pi)$. Consider divergente sequences 
$p_n,q_n$  in $U$. Since $N(p_n)$ and $N(q_n)$ converge to $(N(w))^*$, there 
exists a curve $\ti\ta_n$ joining $N(p_n)$ and $N(q_n)$ in $S^2-{N(w)^*}$ 
with $L_\nu(\ti\ta_n)\to 0$, hence $L_\be\bl((N|U)^{-1}\circ\ta_n\br)\to 0$. Thus 
we have that $d_\be(p_n,q_n)\to 0$. As a consequence we have that $\wi U-U=\{q\}$ 
and $\wi N(q)=(N(w))^*$.

We first consider the case that $q\in M$. Since $d_\be(w,q)=\pi$ we have 
that $q\notin \p M$, hence $\wi U$ is a bounded surface 
without boundary, hence it is compact and agrees with $M$, which contradicts the fact that $(M,\be)$ is not 
compact. 

The second possibility
is that $q\in \wi U\cap\de M$. 
Since $N|_U:U\to B_\nu\bl(N(w),\pi\br)$ is a bijection and $\wi U=U\cup\{q\}$, we conclude that $\wi N:\wi U\to S^2$ is a bijection. Take
$y\in S^2$ with $0<d_\nu\bl(N(w),y\br)=r<\fr \pi 2$. Then $\wi N(q)\in S_\nu(y,\pi-r)$. 
Set $B=N^{-1}\bl(B_\nu(y,\pi-r)\br)$. Note
that $\pi-r>\fr\pi 2$ and that $N|_{\wi B}:\wi B\to S^2$ is injective, hence
$\wi M$ is concave in $q$, which contradicts our hypotheses. The proof of Lemma \ref{geodesicpi} is complete.
\end{proof}

\begin{lemma} \label{small_delta} Consider a connected noncompact surface  $M$ and a 
$C^1$ local isometry $N:(M,\be)\to (S^2,\nu)$ such that there exists no point $q\in\de M$ 
at which $\wi M$ is concave. Fix $p\in M-\p M$ and $\de_0>0$. If $\p M\not=\emptyset$, assume further that $0<\de_0\le\fr{d_\be(p,\p M)}5$. Then for any distinct points $z,u\in \bar B=\bar B_\be(p,\de_0)$, there exists a unique unit speed minimal geodesic 
$\g:[0,d]\to M-\p M$ from $z$ to $u$ with  $L_\be(\g)<\pi$. In particular $N(z)\not=N(u)$, hence the map $N|_{\bar B}:\bar B\to S^2$ is injective.
 \end{lemma}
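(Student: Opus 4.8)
The plan is to deduce the whole statement from a single geometric construction: to produce a connected set $U\subset M-\p M$ containing $z$ and $u$ on which $N$ is injective and whose image $N(U)\subset S^2$ is strongly convex of $\nu$-diameter $<\pi$. Granting such a $U$, the conclusions are immediate. Since $z\ne u$ and $N|_U$ is injective, $N(z)\ne N(u)$. As $N(U)$ is strongly convex of diameter $<\pi$, there is a unique minimal $\nu$-geodesic $\bar\si$ from $N(z)$ to $N(u)$; it lies in $N(U)$ and $L_\nu(\bar\si)=d_\nu(N(z),N(u))<\pi$. Then $\g=(N|_U)^{-1}\circ\bar\si$ is a $\be$-geodesic from $z$ to $u$, contained in $M-\p M$, with $L_\be(\g)=L_\nu(\bar\si)<\pi$. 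Minimality and uniqueness are then formal consequences of the facts that $N$ preserves the length of curves and is a local homeomorphism: any curve from $z$ to $u$ projects to a curve from $N(z)$ to $N(u)$ of $\nu$-length $\ge d_\nu(N(z),N(u))=L_\be(\g)$, so $\g$ is minimal (and $d_\be(z,u)<\pi$); and any competing minimal $\be$-geodesic of length $<\pi$ projects to a $\nu$-geodesic of length $<\pi$ between the same endpoints, hence to $\bar\si$ itself, so by uniqueness of lifts of $\bar\si$ starting at $z$ it must equal $\g$.

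I would build $U$ with Lemma \ref{general_lemma}, taking $w=p$ and $F$ a small closed disk about $p$ on which $N$ is injective, so that $N(F)=\bar B_\nu(N(p),\ep)$. I then grow a nested family of strongly convex domains $\hat F_t$, modelled on the thickened-geodesic domains used in the proof of Lemma \ref{geodesicpi}: each $\hat F_t$ is the $\de$-neighborhood (for a fixed small $\de$) of a convex core built from $N(p)$ and lengthening geodesic arcs, expanding until the limit domain $Y=\bigcup_t\hat F_t$ is strongly convex, of diameter $<\pi$, and contains $N(z)$ and $N(u)$. Conditions (\ref{F0})--(\ref{deformation}) of Lemma \ref{general_lemma} are routine. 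Condition (\ref{partition})(b) holds exactly as in Lemma \ref{geodesicpi}: every boundary point of $\hat F_t$ not already in $N(F)$ lies on a curve equidistant to a geodesic, i.e. on a circle $S_\nu(p_t,\fr\pi2+\de)$ of radius $>\fr\pi 2$, with $\hat F_t$ locally inside the corresponding ball. For condition (\ref{star_shaped}) I bound, for each such boundary point $x$, the $\nu$-length of a path in $\hat F_t$ from $N(p)$ to $x$ by (distance from $N(p)$ to the core) $+$ (length along the core) $+\,\de\le 3\de_0+\de<5\de_0\le d_\be(p,\p M)$; this is the one place the constant $5$ is used, and it simultaneously keeps the whole construction at $\be$-distance $<5\de_0$ from $p$, hence inside $M-\p M$ (recall $d_\be(z,\p M),d_\be(u,\p M)\ge4\de_0$). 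Lemma \ref{general_lemma} then yields a connected $U\subset M-\p M$ containing $F$ with $N|_U:U\to Y$ a bijection.

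To see that $z$ (and likewise $u$) actually lies in $U$, rather than being represented in $U$ only by some other preimage of $N(z)$, I would argue by a connectedness and lifting argument: the set of points of $\bar B=\bar B_\be(p,\de_0)$ joined to $p$ by a $\be$-geodesic of length $<\pi$ inside $M-\p M$ is nonempty and open, and, using that $\wi M$ has no concave point, closed in the connected set $\bar B$, hence equals $\bar B$; for such a point $x$ the projected geodesic is the minimal $\nu$-geodesic from $N(p)$ to $N(x)$, which lies in the convex $Y$, so by uniqueness of path-lifting for the local homeomorphism $N$ this geodesic is the $(N|_U)$-lift and ends inside $U$, giving $x\in U$. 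The genuine obstacle is controlling the size of the covered region $Y$: a priori $\de_0$ may be large, so one must show $N(\bar B)$ cannot attain $\nu$-diameter $\pi$. This is exactly where the standing hypothesis enters — if the growth of $\hat F_t$ tried to produce a domain of diameter $\pi$ (equivalently, if two points of $\bar B$ had antipodal $N$-images, forcing a pulled-back $\be$-geodesic of length $\pi$), then either the completed region would close up onto a compact surface, contradicting noncompactness, or a limit boundary point $q\in\de M$ would satisfy Definition \ref{concave}, contradicting the assumed absence of concave points; this is the same mechanism that forces $d_\nu(N(z),N(u))<\pi$ and underlies the bound $L_\be(\g)<\pi$. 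Verifying this confinement, together with conditions (\ref{partition})(b) and (\ref{star_shaped}) throughout the deformation, is the technical heart of the argument.
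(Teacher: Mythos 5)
Your opening reduction is sound, and it is in fact how the paper exploits its lifted sets: once one has a connected $U\subset M-\p M$ with $N|_U$ injective, $N(U)$ strongly convex of $\nu$-diameter $<\pi$, and $z,u\in U$, then existence, minimality and uniqueness of the geodesic follow formally, exactly as you argue. The genuine gap is that you never construct such a $U$; you defer precisely the part that carries all the difficulty. Concretely: (i) your one-shot growth of a convex domain from $N(p)$ ``until it contains $N(z)$ and $N(u)$'' is not a construction --- before the lemma is proved there is no control on where $N(z),N(u)$ lie (when $\de_0\ge\pi$ the inclusion $N(\bar B)\subset \bar B_\nu(N(p),\de_0)$ says nothing), no reason that any strongly convex domain, which necessarily has diameter $<\pi$, can contain all three image points, and no specification of which arcs form the ``core''; (ii) the open--closed argument you substitute for the membership $z,u\in U$ begs the question: openness of your set already needs the convex-hull-lifting step, and closedness is exactly the incompleteness problem (a limit of geodesics may reach length $\pi$ or escape to $\de M$), which you dismiss by citing ``the same mechanism'' as Lemma \ref{geodesicpi}. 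But Lemma \ref{geodesicpi} can only be applied at a point whose $\be$-distance to $\p M$ exceeds $\pi$, and when $\de_0$ is small (the hypothesis allows $\de_0\le d_\be(p,\p M)/5$ arbitrarily small) no point of your configuration is that far from $\p M$, so the mechanism cannot be invoked as stated.

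The paper closes this gap with an induction your proposal does not contain: it covers a path in $\bar B$ from $z$ to $u$ by small strongly convex normal balls $\bar B_i=\bar B_\be(\si(t_i),r)$ with $r<\min\lf\{\fr{\de_0}2,\pi\rg\}$, and inductively produces minimal $\be$-geodesics $\g_i$ from $z$ (not from $p$) to $\si(t_i)$. The key quantitative step (Claim \ref{less_pi}) is $s_i+2r<\pi$: since $\g_i$ is minimal, $s_i\le \mathrm{diam}_\be(\bar B)\le 2\de_0$, so its midpoint $w_i$ satisfies $d_\be(w_i,\p M)\ge d_\be(p,\p M)-2\de_0\ge 3\de_0\ge s_i+\de_0>s_i+2r$; hence if $s_i+2r\ge\pi$ one gets $d_\be(w_i,\p M)>\pi$ and Lemma \ref{geodesicpi} applies at $w_i$, a contradiction. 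This midpoint bookkeeping is where the factor $5$ really earns its keep, and it is what makes the $\pi$-confinement valid for every $\de_0$; in your proposal the factor $5$ is used only for condition (\ref{star_shaped}). With $s_i+2r<\pi$ in hand, the paper builds the convex hull of $N\bl(\bar B_0\cup\g_i([0,s_i])\cup\bar B_i\br)$ as an explicit increasing family of domains (monotonicity via Lemma \ref{Ls}), lifts it with Lemma \ref{general_lemma}, obtains a strongly convex lifted domain $W_i$ (using concavity of the distance to the boundary, \cite{cg}), and finds the next minimal geodesic $\g_{i+1}$ inside $W_i$; no limit of geodesics is ever taken, so incompleteness never intervenes. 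Until you supply this induction, or an equivalent replacement for it, what you have is a plan rather than a proof.
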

\begin{proof} By the triangle inequality we have that $\mbox{diam}_\be(\bar B)\le 2\de_0$.
Fix distinct points $z,u\in \bar B$. Since any metric ball in a Riemannian manifold is
path-connected,
 there exists a continuous curve $\si:[0,a]\to \bar B$ from $z$ to $u$.
By compactness there exist 
$0<r<\min\lf\{\fr{\de_0}2,\pi\rg\}$ and a partition
$0=t_0<t_1<\cdots<t_k=a$ of the interval $[0,a]$
such that for all $0\le i\le k-1$ it holds that $\bar B_i=\bar B_\be\bl(\si(t_i),r\br)$ is
a strongly convex normal ball in $M-\p M$ whose interior $B_i$ contains $\si(t_{i+1})$. 
Without loss of generality we may assume that $\si(t)\not= z$ for $0<t\le a$.

We will show by induction that for any $1\le i\le k$, there exists a unique unit speed minimal $\be$-geodesic
$\g_i:[0,s_i]\to M-\p M$ from $z=\si(0)$ to $u_i=\si(t_i)$ with $L_\nu(\g_i)<\pi$.   
 For $i=1$ this assertion is trivial. Assume that for some $1\le i
 \le k-1$ there exists a unique unit speed minimal $\be$-geodesic $\g_i:[0,s_i]\to M-\p M$ from
 $z=\si(0)$ to $u_i=\si(t_i)$. Set $w_i=\g_i\lf(\fr {s_i} 2\rg)$. Since $\mbox{diam}_\be(\bar B)\le 2\de_0$ we have that 
 \begin{equation}\label{siover2} d_\be(z,w_i)=
 \fr {d_\be(z,u_i)}2=\fr{s_i}2\le\de_0.
 \end{equation}
 
 If $\p M\not=\emptyset$ we obtain from (\ref{siover2}) that
 \begin{equation}\label{from_boundary}d_\be(w_i,\p M)\ge d_\be(p,\p M)-d_\be(p,z)-d_\be(z,w_i)\ge d_\be(p,\p M)-2\de_0\ge 3\de_0.\end{equation}
 
 Since $\bar B_0$ and $\bar B_i$ are normal balls with the same radius $r$, we may extend $\g_i$ in both directions  obtaining a unit speed $\be$-geodesic $\xi_i:[-r,s_i+r]\to M$. 
 
 \begin{claim}  \label{less_pi}$s_i+2r<\pi$, hence $s_i<\pi$ and $r<\fr \pi 2$.
 \end{claim}
 In fact, if $\p M=\emptyset$, this inequality follows immediately from 
 Lemma \ref{geodesicpi} applied to the geodesic $\xi_i$. If $\p M\not=\emptyset$  
 we assume by contradiction that $s_i+2r\ge \pi$. Since $r$ was chosen so that $r<\fr{\de_0}2$, we obtain from (\ref{siover2}) and (\ref{from_boundary}) that $d_\be(w_i,\p M)\ge 3\de_0\ge s_i+\de_0>s_i+2r\ge\pi$, hence $d_\be(w_i,\p M)>\pi$. By Lemma \ref{geodesicpi} we conclude that $s_i+2r< \pi$, and this contradiction proves Claim \ref{less_pi}. 
   
 Consider the set
$$\Omega=\Omega_i=\bar B_0\cup \g_i([0,s_i])\cup \bar B_i.$$

\begin{claim}\label{NinjOmega} $N$ is injective on $\Omega$.
\end{claim} 
In fact, by Claim \ref{less_pi} we have that $r<\fr\pi 2$ and $s_i<\pi$. 
Thus the fact that $N$ is a $C^1$ local isometry implies that the maps $N|_{\bar B_0}$, $N|_{\bar B_i}$ and $N|_{\g_i([0,s_i])}$ are injective. 
The map $N$ is injective on $\bar B_0\cup\g_i([0,s_i])$ and on $\bar B_i\cup\g_i([0,s_i])$), since $r<\fr \pi 2$ and then $N(\bar B_0)$ and  $N(\bar B_i)$ are strongly convex. Thus, to 
show that $N$ is injective on $\Omega$ it suffices to show that $N$ is injective on $\bar B_0\cup\bar B_i$. Fix $x\in \bar B_0$ and $y\in \bar B_i$ such that $v=N(x)=N(y)$. Since 
$N$ is a $C^1$ local isometry we 
have that 
$$2r\ge d_\be(z,x)+d_\be(u_i,y)\ge d_\nu(N(z),v)+d_\nu(N(u_i),v)\ge 
d_\nu(N(z),N(u_i))=s_i.$$ 
The fact that $2r\ge s_i$ implies that $w_i\in 
\bar B_0\cap\bar B_i$. Since $\bar B_0$ and $\bar B_i$ are strongly convex 
balls, there exist  unit speed minimal $\be$-geodesics $\la_1:[0,a_1]\to \bar B_0$ from 
$w_i$ to $x$ and $\la_2:[0,a_2]\to \bar B_i$ from $w_i$ to $y$. We have that 
$L_\nu(N\circ\la_j)\le 2r<\pi$ for $j=1,2$, and $(N\circ\la_j)(0)=N(w_i), (N\circ\la_j)(a_j)=v$, for $j=1,2$. Thus 
we have that $a_1=a_2$ and $N\circ\la_1=N\circ\la_2$. Since $N$ is a $C^1$ local isometry we obtain 
that $\la_1'(0)=\la_2'(0)$, hence $x=y$.  Claim \ref{NinjOmega} is proved.

By compactness there
exists $0<\ep<r$ sufficiently small such that for any $t\in [0,s_i]$ the ball 
$\bar B_\be(\g_i(t),\ep)$ is a normal ball in $M-\p M$ and $N$ is injective on 
$$F=F_\ep=\bar B_0\cup\bigcup_{t\in[0,s_i]}\bar B_\be\bl(\g(t),\ep\br)\,\cup\, \bar B_i=\bar B_0\cup
\bar B_\be\bl(\g_i\bl([0,s_i]\br),\ep\br)\cup \bar B_i.$$
Set:
$$\ti \g_i =N\circ \g_i, \hat B_0=N(\bar B_0)=\bar B_\nu\bl(N(z),r\br), \hat B_i=N(\bar B_i) = 
\bar B_\nu\bl(N(u_i),r\br), 
X=N(\Omega).
$$ 
In the particular case that $2r>s_i$, we may use the fact that $N$ is a $C^1$ local isometry, which is injective on the compact domain 
$\Omega=\bar B_0\cup\g_i\bl([0,s_i]\br)\cup \bar B_i=\bar B_0\cup \bar B_i$,  
to obtain that the number $\ep>0$ 
as above may be chosen not so small  such that $\hat B_0\cup \hat B_i$ is properly 
contained in $\hat B_0\,\cup\,\bar B_\nu\bl(\ti\g_i([0,s_i]),\ep\br)\,\cup\, \hat B_i$. 
 More precisely, if $2r>s_i$ we set $r_0=d_\be(w_i,\p \bar B_0\cap\p \bar B_i)>0$. 
Since $N$ is injective and a $C^1$ local isometry on $\bar B_0\cup \bar B_i$, each $\bar B_\be(\g_i(t),r_0)$ is a normal ball contained in $\bar B_0\cup \bar B_i$, for $0\le t\le s_i$, 
and $N$ is injective on $\bar B_\be\bl(\g_i([0,s_i]),r_0\br)$. Thus by compactness there 
exists $r_0<\ep<r$ such that for any $t\in [0,s_i]$ the ball 
$\bar B_\be(\g_i(t),\ep)$ is a normal ball in $M-\p M$ and $N$ is injective on $F_\ep=\hat B_0\cup\bar B_\nu\bl(\ti\g_i\bl([0,s_i),\ep\br)\br)\cup \hat B_i$.

The idea now is to construct the convex hull of $X=N(\Omega)$ as the union $\hat F_T$ 
of an increasing family of sets $\bl(\hat F_t\br)_{\ep\le t\le T}$ which satisfies the conditions in Lemma \ref{general_lemma} for $0\le t<T$.  Set $\hat F_{\ep}=N(F)$. For $\ep\le t\le r$, set $\hat F_t
=\hat B_0\cup\bar B_\nu\bl(\ti\g_i\bl([0,s_i]\br),t\br)\cup \hat B_i$. Note that $\hat F_r=\bar B_\nu\bl(\ti\g_i\bl([0,s_i]\br),r\br)$.

To define $\hat F_{t}$ for $t>r$, we write $t=s+r$ for convenience. 
Consider a unit speed $\nu$-geodesic $\eta:\Bbb R\to S^2$ orthogonal to $\ti\g_i$ at 
$\eta(0)=N(w_i)=\ti\g_i\lf(\fr{s_i}2\rg)$. For $0\le s\le\fr\pi 2$, set $q_{s}=
\eta\lf(-\fr\pi 2+s\rg)$ and $q^s
=\eta\lf(\fr\pi 2-s\rg)$. By symmetry we have that
$$d_s=d_\nu\bl(q_{s},N(z)\br)=d_\nu\bl(q_{s},N(u_i)\br) =d_\nu\bl(q^{s},N(z)\br)=d_\nu\bl(q^{s},N(u_i)\br).$$ 

By Claim \ref{less_pi} we have that $\fr {s_i}2<\fr\pi 2$. Thus equation (\ref{cosines}) implies that,  
if $0\le s\le \fr\pi 2$, it holds that $d_s\le\fr\pi 2$, hence by Claim \ref{less_pi} 
it holds that $d_s+r\le \fr\pi 2+r<\pi$. As a consequence there exists a unique unit speed minimal $\nu$-geodesic $\va_{s}:[0,d_{s}+r]\to S^2$ satisfying 
$\va_{s}(0)=q_{s}$ and $\va_s(d_s)=N(z)$.  Set $\psi_s=R\circ\va_s:[0,d_{s}+r]\to S^2$, where $R$ is the reflection which fixes the image of $\eta$. Set $\va^s=S\circ\va_s$ and $\psi^s=S\circ\psi_s$, 
where $S$ is the reflection which fixes the equator containing the image of $\ti\g_i$.  

By (\ref{cosines}) we have that 
$d_{{0}}=\fr\pi 2$ and that the map $s\in\lf[0,\fr\pi 2\rg]\longmapsto d_s$ is strictly decreasing, hence the number $d_s+r$ decreases 
from $\fr\pi 2+r$ to $\fr {s_i} 2+r$.   By Claim \ref{less_pi} we have that $\fr {s_i}2+r<\fr\pi 2$, hence there exists a unique 
$0<\bar s<\fr\pi 2$ such that $d_{\bar s}+r=\fr\pi 2$. Since $\va_s'(d_s+r)$ is 
orthogonal to both $S_\nu\bl(N(z),r\br)$ and $S_\nu\bl(q_s,d_s+r\br)$, we obtain 
that $S_\nu(q_s,d_s+r)$ is tangent to 
both $\p\hat B_0=S_\nu\bl(N(z),r\br)$ and $\p\hat B_i=S_\nu\bl(N(u_i),r\br)$. Similarly we 
obtain that $S_\nu(q^s,d_s+r)$ is tangent to $\p \hat B_0$ and $\p\hat B_i$. 

For $0\le s\le \bar s$, consider a $C^1$ piecewise smooth 
simple closed curve $\ta_s$, which follows $S_\nu(q_s,d_s+r)$ 
from $\va_s(d_s+r)$ to $\psi_s(d_s+r)$, then $\p \hat B_i$ from $\psi_s(d_s+r)$ to 
$\psi^s(d_s+r)$, then $S_\nu(q^s,d_s+r)$ from $\psi^s(d_s+r)$ to $\va^s(d_s+r)$, 
and then $\p \hat B_0$ from $\va^s(d_s+r)$  to $\va_s(d_s+r)$. For 
$0<s\le \bar s$, let $\hat F_{s+r}$ 
be the domain which contains $X$ and is bounded by the image of $\ta_s$. Note 
that $\hat F_{\bar s}+r$ agrees with the convex hull of $X$.  Set $T=\bar s+r$. 

Now we will prove that the family $(\hat F_t)_{\ep\leq t<T}$ satisfies the conditions 
in Lemma \ref{general_lemma}. We will see that Conditions (\ref{monotone}) and 
(\ref{deformation}) in Lemma \ref{general_lemma} hold even for $\ep\le t\le T$. Conditions (\ref{F0}) and (\ref{partition}) in Lemma 
\ref{general_lemma} are 
trivially satisfied. 

\begin{claim} \label{pr_deformation} Condition {\rm(\ref{deformation})} in Lemma 
{\rm\ref{general_lemma}} holds for $\ep\le t\le T$.
\end{claim}
From the facts that $d_0=\fr \pi 2$ and that the image of $\ti\g_i$ is contained in 
 $S_\nu\lf(q_{_0},\fr\pi 2\rg)=S_\nu\lf(q^0,\fr\pi 2\rg)$, we 
obtain easily that the image of $\ta_{_0}$ agrees with $\p \hat F_r=S_\nu\bl(\ti\g_i([0,s_i],r)\br)$. This shows the continuous dependence of $\p\hat F_t$ on the parameter $\ep\le t\le T$ 
(note that the homotopy $s\longmapsto \ta_s$ extends easily to $s=\bar s$). 
Thus to see that condition (\ref{deformation}) in Lemma \ref{general_lemma} 
holds it suffices to verify that each $\p\hat F_t$ is the image of a piecewise smooth 
simple closed curve with internal angles different from $0$ and $2\pi$. This is clear for 
$r\le t\le T$, since each $\ta_s$ is $C^1$ and piecewise smooth. Fix $\ep\le t<r$. 
The boundary $\p \hat F_t$ has $4$ vertices. Let $x_1$ be the vertex contained 
in $S_\nu(\ti\g_i(0),r)\cap S_\nu\lf(q_{_0},\fr\pi 2+t\rg)$.  
 Since  
the intersection between the circles $S_\nu(\ti\g_i(0),r)$ and 
$S_\nu\lf(q_{_0},\fr\pi 2+t\rg)$ contains exactly two points for $\ep\le t<r$, 
then they intersect 
themselves transversely, relatively to the ambient space $S^2$. In particular the 
corresponding internal angle at the vertex $x_1$ 
is different from $0$ and $2\pi$. By symmetry we conclude the same fact 
about the other $3$ vertices. Thus condition (\ref{deformation}) in Lemma \ref{general_lemma} 
holds.

\begin{claim} \label{pr_monotone} Condition {\rm(\ref{monotone})} in Lemma 
{\rm\ref{general_lemma}} holds for $\ep\le t\le T$. 
\end{claim}
If $\ep\le t\le t'\le r$,  we have by construction that $\hat F_t\subset \hat F_{t'}$. For 
$r\le t< t'\le T$ write $t=r+s$ and $t'=r+s'$. Set $D_0=\ti\g_i\bl([0,{s_i}]\br)$ 
and 
$$D_s=\bar B_\nu(q_s,d_s)\cap \bar B_\nu(q^s,d_s),$$
if $0<s\le \bar s$. Thus Claim \ref{pr_monotone} follows from Lemma \ref{Ls}. 

If $\p M\not=\emptyset$ we will verify that condition (\ref{star_shaped}) in Lemma \ref{general_lemma} 
holds. We will show that for any $\ep< t< T$ and any $x\in \p\hat F_t$ with 
$x\notin N(F)$, there 
exists a piecewise smooth curve $\g_{xt}:[0,1]\to \hat F_t$ joining $N(w_i)$ and $x$ such 
that $L_\nu(\g_{xt})<d_\be(w_i,\p M)$.  First assume that $\ep<t\le r$. In this 
case we have 
that $x\in S_\nu\lf(q_0,\fr \pi 2+t\rg)\cup S_\nu\lf(q^0,\fr \pi 2+t\rg)$. Let 
$P(x)$ be the natural projection of $x$ onto $\ti\g_i([0,s_i])$. Let $\g_{xt}$ be 
the piecewise smooth curve which follows the image of $\ti\g_i$ from 
$N(w_i)$ to $P(x)$ and then follows the minimal geodesic from $P(x)$ to $x$. 
Since $B_\nu(\ti\g_i
([0,s_i]),t)\subset \hat F_t$, we have that the image of $\g_{xt}$ is contained in 
$\hat F_t$. By using inequality (\ref{from_boundary}) above, we have that 
$$L_\nu(\g_{xt})\le \fr {s_i} 2+t\le\fr {s_i}2+r<\de_0+\fr{\de_0}2=\fr{3\de_0}2<3
\de_0\le d_\be(w_i,\p M).$$
Now assume that $t=r+s$ for some $0<s<s_1$. Thus $x\in S_\nu(D_s,r)$. Let $P_1(x)$ 
be the natural projection from $x$ to $\p D_s$. We define some piecewise smooth 
curve $\g_{xt}:[0,1]\to \hat F_t$, which follows a minimal geodesic $\chi_{_1}$ 
from $N(w_i)$ to 
$P_{_1}(x)$ and then a minimal geodesic from $P_{_1}(x)$ to $x$.  Since $D_s$ is strongly 
convex we have that the image of $\g_{xt}$ is contained in $\hat F_t$. We 
claim that $L_\nu(\chi_{_1})\le \fr{s_i}2$. Indeed, similarly 
as in the proof of Lemma \ref{Ls} we see that the map $y\in \p D_s\longmapsto 
d_\nu(N(w_i),y)$ attains its maximum at $N(z)$ and $N(u_i)$,  hence we have that  
$$L_\nu(\g_{xt})=L_\nu(\chi_{_1})+r\le d_\nu(N(w_i),N(z))+r=\fr {s_i}2+r< \fr{3\de_0}2<d_\be(w_i,\p M).$$ 

Thus we may use Lemma \ref{general_lemma} to obtain that there exists a connected 
set $U_i$ with $F\subset U_i\subset M-\p M$ such that $N|_{U_i}:U_i\to \bigcup_{\ep\le t<T}\hat F_t$ is a bijection and a $C^1$ local 
isometry. From the continuity of the map $r\le s\le s_1\longmapsto \ta_s$ we see that 
$\inte(\hat F_T)\subset\bigcup_{\ep\le t<T}\hat F_t$. 
The set $\hat F_T$ is strongly convex, hence we have that $\inte(\hat F_T)$ is also 
strongly convex, since the map $x\in \hat F_T\to d_\nu\bl(x,\p\hat F_T\br)$ is concave (see Theorem 1.10 
in \cite{cg}). Since $N|_{U_i}$ is a bijection which is a $C^1$ local isometry we obtain 
easily that $W_i=(N|_{U_i})^{-1}\bl(\inte(\hat F_T)\br)$ is strongly convex. Since $\si(t_{i+1})\in 
B_i=B_\be(u_i,r)$ we have that $z$ and $\si(t_{i+1})$ belong to $\inte(X)\subset W_i$. We conclude that there exists a unique 
unit speed minimal $\be$-geodesic $\g_{i+1}$ from $z$ to $u_{i+1}=\si(t_{i+1})$. 

We claim that the image of each $\g_{i+1}$ is contained in 
$M-\p M$. To show this we may assume that $\p M\not=\emptyset$.  
Given
$x$ in the image of $\g_{i+1}$, if $d_\be(z,x)\le d_\be(x,u_{i+1})$ we obtain 
an estimative as in inequality (\ref{from_boundary}), obtaining that $d_\be(x,\p M)\ge 
3\de_0$. If $d_\be(z,x)\ge d_\be(x,u_{i+1})$ we obtain similarly that $d_\be(x,\p M)\ge 
3\de_0$, hence $x\in M-\p M$. Thus we have that the image of $\g_{i+1}$ is contained in 
$M-\p M$. 

By triangle 
inequality we have that $L_\be(\g_{i+1})=L_\nu(N\circ\g_{i+1})\le s_{i}+r<\pi$, 
hence $L_\be(\g_{i+1})<\pi$. 
We conclude by induction that there exists a unique minimal $\nu$-geodesic $\g:[0,d]\to 
M-\p M$ 
from $z$ to $\si(t_k)=u$ with length less than $\pi$, whose image is 
contained in $M-\p M$. Since $N$ is a $C^1$ local isometry and $L_\be(\g)<\pi$, 
we have that 
$N$ is injective on the image of $\g$. In particular we have that $N(z)\not=N(u)$, hence $N$ is injective on $B_\be(p,\de_0)$. Lemma 
\ref{small_delta} is proved. 
\end{proof}

\begin{proof}[\bf Proof of the Efimov's Theorem] Assume by contradiction that 
there exists  an immersion 
$\varphi:M\to \Bbb R^3$
 as in the 
statement of the Efimov's Theorem. Let $\al$, respectively, $\be$ be the Riemannian
metrics induced by $\varphi$, respectively, by the Gauss map $N$.  
Lemmas \ref{concave} and \ref{small_delta} 
imply that $N$ is injective on $M$ and that any two points in $M$ are joined 
by a minimizing geodesic. In particular $M$ is simply-connected, hence 
$(M,\al)$ is a Hadamard surface. Thus we have that $A_\al(M)=+\infty$ 
 (see \cite{mi}). 
Since $|K|\ge \kappa>0$ we have that 
 $\int_M|K|dA_\al=+\infty$, hence by equation (\ref{infinite_area}) we 
have that $A_\be(M)=+\infty$. However, since $N$ is injective on $M$ and a $C^1$ local isometry we have that $A_\be(M)\le A_\nu(S^2)$, which gives us a contradiction and proves Efimov's Theorem.
\end{proof}

\begin{lemma} \label{Misbounded} Consider a connected noncompact surface  $M$ 
with compact boundary $\p M\not=\emptyset$ and a $C^1$ local isometry $N:(M,\be)\to (S^2,\nu)$ such that there exists no point $q\in\de M$ 
at which $\wi M$ is concave. Then $M$ is  bounded with respect to the metric $\be$. 
More precisely, $M\subset B_\be(\p M,5\pi)$. 
\end{lemma}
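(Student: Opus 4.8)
The plan is to argue by contradiction and to reduce everything to Lemma \ref{small_delta}, since the delicate ``similar ideas as in [Mi]'' have already been packaged there. Suppose the conclusion fails. As $\p M\subset B_\be(\p M,5\pi)$ trivially, the failure must come from an interior point, so there is a point $p\in M-\p M$ with $d_\be(p,\p M)\ge 5\pi$. First I would record that this makes $\de_0=\pi$ an admissible radius in Lemma \ref{small_delta}, because $\pi\le\fr{d_\be(p,\p M)}5$. Applying that lemma with this $p$ and $\de_0=\pi$ then yields: any two distinct points of $\bar B=\bar B_\be(p,\pi)$ are joined by a minimal (hence length-realizing) $\be$-geodesic of length strictly less than $\pi$.

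The key deduction is to specialize this to $z=p$. Since $p\in\bar B$ and every other $u\in\bar B$ is joined to $p$ by a minimizing geodesic of length $<\pi$, we get $d_\be(p,u)<\pi$ for all $u\in\bar B$. But $\bar B$ is exactly $\{u\in M: d_\be(p,u)\le\pi\}$, so this says that no point of $M$ lies at $\be$-distance exactly $\pi$ from $p$. Now I would invoke connectedness: $M$ is path-connected and $d_\be(p,\cdot)$ is continuous with $d_\be(p,p)=0$, while any boundary point $x_1\in\p M$ satisfies $d_\be(p,x_1)\ge 5\pi>\pi$. Choosing a path from $p$ to $x_1$ in $M$ and applying the intermediate value theorem to $d_\be(p,\cdot)$ along it produces a point at distance exactly $\pi$ from $p$, contradicting the previous sentence. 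Hence no interior point can be at distance $\ge 5\pi$ from $\p M$, which is precisely $M\subset B_\be(\p M,5\pi)$.

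The only real subtlety --- and the step I expect to carry the weight --- is recognizing the correct specialization, namely that taking $\de_0=\pi$ converts the strict bound $L_\be(\g)<\pi$ in Lemma \ref{small_delta} into a genuine diameter-type obstruction: it forbids any point at distance exactly $\pi$ from $p$, and connectedness upgrades this to forbidding all points at distance $\ge\pi$. This also explains the constant: $5\pi$ is forced because Lemma \ref{small_delta} requires $\de_0\le\fr{d_\be(p,\p M)}5$, and we need precisely $\de_0=\pi$ to reach the threshold radius $\pi$ of the sphere $S^2$. No use of the completion $\wi M$, of concavity, or of the covering construction is needed directly here; all of that work has already been spent in establishing Lemma \ref{small_delta}.
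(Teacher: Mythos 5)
Your proof is correct, and it follows the same skeleton as the paper's: assume by contradiction that some (necessarily interior) point $p$ has $d_\be(p,\p M)\ge 5\pi$, observe that $\de_0=\pi$ is then admissible in Lemma \ref{small_delta}, and combine this with the intermediate value theorem applied to $d_\be(p,\cdot)$ along a continuous path from $p$ to $\p M$. The two arguments diverge only in the closing step. The paper first produces the point $q$ on the path with $d_\be(p,q)=\pi$, then invokes Lemma \ref{small_delta} to get a unit speed minimal $\be$-geodesic $\g:[0,\pi]\to M-\p M$ from $p$ to $q$, and finally contradicts Lemma \ref{geodesicpi} at the midpoint $w=\g\lf(\fr\pi 2\rg)$, which satisfies $d_\be(w,\p M)\ge 5\pi-\fr\pi 2>\pi$. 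You instead extract the contradiction directly from the strict inequality $L_\be(\g)<\pi$ in the conclusion of Lemma \ref{small_delta}: any curve of length $<\pi$ joining $p$ to $u$ forces $d_\be(p,u)<\pi$, so no point of $\bar B_\be(p,\pi)$, hence no point of $M$, can sit at distance exactly $\pi$ from $p$, and the IVT point is already impossible. Your route saves the midpoint construction and the second appeal to Lemma \ref{geodesicpi}, at the price of leaning on the strictness of the length bound; the paper's route only needs a distance-realizing geodesic of length exactly $\pi$, so it is insensitive to that strictness. It is worth noting that the paper's own reading of Lemma \ref{small_delta} there --- a minimal geodesic of length exactly $\pi$ between points at distance $\pi$ --- is literally in tension with that lemma's stated bound $L_\be(\g)<\pi$; your argument makes precisely this tension the contradiction, which is arguably the cleaner use of the lemma as stated.
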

\begin{proof} Assume by contradiction that there exists a point $p\in M$ with 
$d_\be(p,\p M)\ge 5\pi$. Consider a continuous curve $\ta:[0,1]\to M$ with 
$\ta(0)=p$ and $\ta(1)\in \p M$. Thus there exists a point $q$ in the image of  
$\ta$ such that $d_\be(p,q)=\pi$. By Lemma \ref{small_delta} there exists a unit speed minimal 
$\be$-geodesic $\g:[0,\pi]\to M-\p M$ from $p$ to $q$. Set $w=\g\lf(\fr\pi 2\rg)$. 
We have that 
$$d_\be(w,\p M)\ge d_\be(p,\p M)-d_\be(p,w)=d_\be(p,\p M)-\fr\pi 2>\pi,$$
and this contradicts Lemma \ref{geodesicpi}. Lemma \ref{Misbounded} is proved.
\end{proof}

\begin{lemma}\label{Misunbounded} Consider a connected noncompact surface  $M$ 
with compact boundary $\p M\not=\emptyset$ and a $C^1$ local isometry $N:(M,\be)\to (S^2,\nu)$ such that there exists no point $q\in\de M$ 
at which $\wi M$ is concave. Then $(M,\be)$ is pre-compact.
\end{lemma}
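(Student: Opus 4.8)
The plan is to obtain pre-compactness from a Hopf--Rinow-type argument. The metric $d_\be$ is a length metric, so its completion $\wi M$ is a complete length space, and by Lemma \ref{Misbounded} it is bounded: $M\subset B_\be(\p M,5\pi)$ together with the compactness of $\p M$ gives $\mathrm{diam}_\be(\wi M)<+\infty$. By the Hopf--Rinow--Cohn-Vossen theorem for length spaces, a complete locally compact length space is proper, i.e.\ its closed bounded subsets are compact. Applying this to the closed bounded space $\wi M$ itself would give that $\wi M$ is compact, which is exactly the assertion that $(M,\be)$ is pre-compact. Thus the whole proof reduces to checking that $\wi M$ is locally compact.

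Next I would verify local compactness at the points of $M$. Since $\wi M$ is complete, each closed ball $\bar B_\be(x,\de)$ is complete, so it suffices to show it is totally bounded for small $\de$. If $x\in M-\p M$, choose $0<\de<d_\be(x,\p M)/5$; by Lemma \ref{small_delta} the map $N$ is injective on $\bar B_\be(x,\de)$ and carries the unique short minimal $\be$-geodesics to minimal $\nu$-geodesics, so $N|_{\bar B_\be(x,\de)}$ is distance preserving. Its image lies in $S^2$, hence is totally bounded, and therefore so is $\bar B_\be(x,\de)$. If $x\in\p M$ the local isometry $N$ is in particular a $C^1$ local diffeomorphism of the manifold-with-boundary $M$ near $x$, carrying a small $\bar B_\be(x,\de)$ isometrically onto a relatively open piece of $S^2$, and the same total-boundedness argument applies. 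In both cases $\bar B_\be(x,\de)$ is compact.

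The crux is local compactness at an ideal point $q\in\de M$. Such a $q$ is automatically away from the boundary: if $d_\be(q,\p M)=0$ then, $\p M$ being compact and hence closed in $\wi M$, we would have $q\in\p M\subset M$, contradicting $q\in\de M$; so $\rho_q:=d_\be(q,\p M)>0$. I would fix $\de<\rho_q/4$ and a sequence $w_n\in\bar B_\be(q,\de)\cap M$, noting $d_\be(w_n,\p M)>\rho_q/2$, so that Lemma \ref{small_delta} makes each $\bar B_\be(w_n,\rho_q/10)$ isometric to a spherical cap. After passing to a subsequence $\wi N(w_n)\to v_*\in S^2$. If no subsequence of $\{w_n\}$ converged in $\wi M$ one could assume it is $\eta$-separated; lifting the cap $B_\nu(v_*,\rho_q/10)$ through the local homeomorphism $N$ from each $w_n$ would then produce infinitely many pairwise disjoint isometric sheets whose preimages of $v_*$ nevertheless converge to $q$. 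The plan is to feed this accumulation of sheets into a growing-domain argument of the type of Lemma \ref{general_lemma}: the outer boundaries of successive sheets must eventually fold along some circle $S_\nu(p,r)$ with $\fr\pi 2<r<\pi$, with the interiors of the sheets on the $B_\nu(p,r)$-side near $\wi N(q)$, which is precisely the configuration of Definition \ref{concave}. This would make $\wi M$ concave at $q$, contradicting the hypothesis, and hence force $\{w_n\}$ to have a convergent subsequence; so $\bar B_\be(q,\de)$ is compact.

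I expect this last step --- converting the accumulation of distinct sheets at an ideal point into the concave configuration of Definition \ref{concave}, while keeping the whole construction inside $M-\p M$ --- to be the main obstacle. The Hopf--Rinow reduction and the local compactness on $M$ are routine once Lemmas \ref{Misbounded} and \ref{small_delta} are in hand.
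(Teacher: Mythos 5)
Your Hopf--Rinow reduction is sound as far as it goes: $\wi M$ is a complete length space, it is bounded by Lemma \ref{Misbounded}, and the Hopf--Rinow--Cohn-Vossen theorem would indeed give compactness of $\wi M$ once local compactness is known; your treatment of points of $M$ is also essentially fine. But the proof has a genuine gap exactly where you flag it: local compactness at an ideal point $q\in\de M$, which is the whole content of the lemma. The assertion that ``the outer boundaries of successive sheets must eventually fold along some circle $S_\nu(p,r)$ with $\fr\pi 2<r<\pi$'' is not an argument: nothing about an accumulation of pairwise disjoint isometric sheets forces any folding, let alone along a circle of radius in $\lf(\fr\pi 2,\pi\rg)$, and to invoke Definition \ref{concave} you must actually exhibit $p$, $r$, $\ep$ and an open set $U\subset M-\p M$ with $q\in\wi U$, $\wi N$ injective on $\wi U$ and $N(U)=B_\nu\bl(\wi N(q),\ep\br)\cap B_\nu(p,r)$ --- a continuation construction of precisely the delicate kind that the paper needs the machinery of Lemma \ref{general_lemma} (used inside Lemmas \ref{geodesicpi} and \ref{small_delta}) to carry out. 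Note also that ``infinitely many disjoint sheets'' is not by itself contradictory at this stage: the finiteness of $A_\be(M)$ (Lemma \ref{finite_area}) is proved only \emph{after} pre-compactness, so no area bound is available to you here.

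The irony is that your gap closes with a tool you already use, and without any sheets: the non-concavity hypothesis has already been fully exploited inside Lemma \ref{small_delta}, so you should anchor that lemma at a genuine point of $M$ near $q$ rather than at $q$ itself. Since $\rho=d_\be(q,\p M)>0$ (as you observe), density of $M$ in $\wi M$ gives $w\in M$ with $d_\be(w,q)<\fr\rho{100}$, whence $d_\be(w,\p M)>\fr\rho 2$; Lemma \ref{small_delta} with $\de_0=\fr\rho{10}$ then makes $N$ injective on $\bar B_\be(w,\de_0)$, and distance-preserving there, because the unique minimal $\be$-geodesic joining two of its points has length $<\pi$ and maps to a $\nu$-geodesic of the same length, which is automatically minimal in $S^2$. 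Since $\bar B_\be\lf(q,\fr\rho{100}\rg)\cap M\subset \bar B_\be(w,\de_0)$, compactness of $S^2$ hands every sequence near $q$ a Cauchy subsequence, which is local compactness at $q$. This is in substance what the paper does, organized so that $\wi M$ never has to be analyzed: an arbitrary sequence $p_n$ is joined to the compact collar $C=\bar B_\be(\p M,\ep_0)$ by curves of uniformly bounded length (boundedness from Lemma \ref{Misbounded}), each curve is subdivided into a fixed number $k_0$ of arcs of length $<\fr{\ep_0}{10}$, and the Cauchy property is propagated by induction along the chain of division points, using at each step the small-ball injectivity and the equality $d_\be(x,y)=d_\nu(N(x),N(y))$ furnished by Lemma \ref{small_delta} together with compactness of $S^2$.
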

\begin{proof} 
Since $\p M$ is a finite union of circles, there exists $0<\ep_0<\pi$
such that the set $C=\bar B_\be(\p M,\ep_0)$ is a closed collar neighborhood of $\p M$. 
Set $S_0= S_\be(\p M,\ep_0)$.

Consider a sequence $p_n$ in $M$. We need to prove that 
there exists a Cauchy subsequence of $p_n$. By compactness of $C$, 
if $d_\be(p_n,C)\to 0$ then $p_n$ has a convergent subsequence. 
Thus we may assume, by passing to a subsequence and using the fact that 
$M$ is bounded, that there exists 
some $d>0$ such that $d_n=d_\be(p_n,C)\to d$. For each $n$, there exists a unit speed piecewise smooth curve $\si_n:[0,L_n]\to M$ with 
$\si_n(0)\in S_0$ and $\si_n(L_n)=p_n$ 
such that 
$$L_n=L_\be(\si_n)<d+1\,.$$ 
By discarding a piece of the image of $\si_n$, if necessary, we may assume that $\si_n((0,L_n])\subset M-C$. Fix $k_0\in \Bbb N$ 
such that 
$$\fr{d+1}{k_0}<\fr{\ep_0}{10}.$$
Consider a partition $0=t_{n,0}<t_{n,1}<\cdots<t_{n,k_0}=L_n$, such that for 
any $0\le i\le k_0-1$ it holds that 
\begin{equation}\label{ep_ten}L_\be\bl(\si_n|_{[t_{n,\,i},\ t_{n,i+1}]}\br)=\fr {L_n}{k_0}\le \fr{d+1}{k_0}<\fr{\ep_0}{10}\,.
\end{equation}

Set $p_{n,i}=\si_n(t_{n,i})$. We will prove by induction that for each $0\le i\le k_0$ it holds 
that $p_{n,i}$ has a Cauchy 
subsequence, hence $p_n=p_{n,k_0}$ has a Cauchy subsequence. For $i=0$, the compactness of $S_0$ implies this easily.  
Assume that, for some $0\le i\le k_0-1$, we have, by passing to a subsequence, that $p_{n,i}$ is a Cauchy sequence in $(M,\be)$. We 
need to prove that $p_{n,\,i+1}$ has a Cauchy subsequence.  

There exists $n_1\in \Bbb N$ such that if $n\ge n_1$ then $d_\be(p_{n_1,i},\,p_{n,i})
<\fr{\ep_0}{10}$. Set $q_i=p_{n_1,i}$. By using (\ref{ep_ten}), we obtain that for any 
$n\ge n_1$ it holds that 
$$d_\be(q_i,\,p_{n,i+1})\le d_\be(q_i,\,p_{n,i})+
d_\be(p_{n,i},\,p_{n,i+1})<\fr{\ep_0}{10}+\fr{\ep_0}{10}=\fr{\ep_0}5.
$$

Thus for $n\ge n_1$ we have that $p_{n,i+1}\in B=B_\be\lf(q_i,\fr{\ep_0}5\rg)$. Lemma 
\ref{small_delta} implies that the map  $N|_B:B\to B_\nu\lf(N(q_i),\fr{\ep_0}5\rg)$ is injective 
and that, for any $m,n\ge n_1$, there exists a unique unit speed minimal 
$\be$-geodesic $\eta_{mn}$ from $p_{n,i+1}$ to $p_{m,i+1}$ with $L_\be(\eta_{mn})<\pi$. 
This implies that $N\circ\eta_{mn}$ is the unique speed minimal $\nu$-geodesic 
from $N(p_{n,i+1})$ to $N(p_{m,i+1})$. 

By passing to a subsequence, we may assume that $N(p_{n,i+1})$ is a Cauchy sequence 
in $(S^2,\nu)$. Given $\ep>0$, there exists $n_0\ge n_1$ such that if $m,n\ge n_0$ 
then $d_\nu\bl(N(p_{m,i+1}),N(p_{n,i+1})\br)=L_\nu(N\circ\eta_{mn})<\ep$. In particular 
we have that $d_\be(p_{m,i+1},\, p_{n,i+1})=L_\be(\eta_{mn})<\ep$, hence $p_{n,i+1}$ is a Cauchy sequence.  This completes the proof of Lemma \ref{Misunbounded}.
\end{proof}

\begin{lemma} \label{finite_area} Consider a connected noncompact surface  $M$ 
with compact boundary $\p M\not=\emptyset$ and a $C^1$ local isometry $N:(M,\be)\to (S^2,\nu)$ such that there exists no point $q\in\de M$ 
at which $\wi M$ is concave. Then $A_\be(M)$ is finite.
\end{lemma}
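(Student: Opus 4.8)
The plan is to combine the three previous lemmas about $(M,\be)$. From Lemma \ref{Misbounded} we know $M\subset B_\be(\p M,5\pi)$, so $M$ is bounded, and from Lemma \ref{Misunbounded} we know $(M,\be)$ is pre-compact, i.e.\ its metric completion $\wi M$ is compact. The strategy is to cover $M$ by finitely many sets on each of which $N$ is injective and a local isometry, so that the $\be$-area of each piece is bounded by the $\nu$-area of its image in $S^2$, which is at most $A_\nu(S^2)=4\pi$. Summing over the finite cover then yields $A_\be(M)\le C\cdot 4\pi<+\infty$.

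First I would fix $\de_0>0$ with $\de_0\le\fr{d_\be(p,\p M)}5$ uniformly (using compactness of $\p M$ and boundedness of $M$ one can take a single small $\de_0$, say $\de_0<\min\{\ep_0,\pi\}$ works near the boundary, and away from the boundary the radius constraint is vacuous). By Lemma \ref{small_delta}, for every $q\in M$ the Gauss map $N$ is injective on the ball $\bar B_\be(q,\de_0)$, and on such a ball $N$ is a $C^1$ local isometry, so by the area formula (\ref{infinite_area}) applied on this ball we get $A_\be\bl(\bar B_\be(q,\de_0)\br)=A_\nu\bl(N(\bar B_\be(q,\de_0))\br)\le A_\nu(S^2)=4\pi$. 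Next I would pass to the compact completion $\wi M$ and extract a finite subcover: the balls $\{B_\be(q,\de_0)\}_{q\in M}$ together with small balls around the finitely many points of $\de M$ cover the compact space $\wi M$, so finitely many of them, say $N_0$ of them, already cover $M$. Therefore
\begin{equation}\label{finite_sum}
A_\be(M)\le\sum_{j=1}^{N_0}A_\be\bl(\bar B_\be(q_j,\de_0)\br)\le N_0\cdot 4\pi<+\infty.
\end{equation}

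The main obstacle is bookkeeping the boundary: the radius constraint $\de_0\le\fr{d_\be(\cdot,\p M)}5$ in Lemma \ref{small_delta} degenerates as one approaches $\p M$, so one cannot use a single $\de_0$ for balls centered at points close to the boundary. To handle this cleanly I would separate $M$ into the collar $C=\bar B_\be(\p M,\ep_0)$ (with $\ep_0$ as in the proof of Lemma \ref{Misunbounded}) and its complement. The collar $C$ is compact, being a finite union of closed collars around the circles of $\p M$, so it has finite $\be$-area directly. On the complement $M-\inte(C)$ every point satisfies $d_\be(\cdot,\p M)\ge\ep_0$, so a uniform radius $\de_0=\fr{\ep_0}5$ makes Lemma \ref{small_delta} applicable to every ball, and the finite-subcover argument above produces a finite bound for $A_\be\bl(M-\inte(C)\br)$. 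Adding $A_\be(C)<+\infty$ completes the proof, and this finite total area, contradicting (\ref{infinite_area}), is exactly what closes the proof of Theorem A.
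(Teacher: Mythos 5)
Your final argument (third paragraph) — decompose $M$ into the compact collar $C=\bar B_\be(\p M,\ep_0)$ and its complement, use the uniform radius $\de_0=\fr{\ep_0}5$ so that Lemma \ref{small_delta} makes $N$ injective on every ball centered in $M-\inte(C)$, produce a finite cover from pre-compactness, and sum the areas — is exactly the paper's proof. The one step that does not hold as written is the extraction of the finite cover. You pass to the compact completion $\wi M$ and cover it by the balls $B_\be(q,\de_0)$, $q\in M$, together with ``small balls around the finitely many points of $\de M$''. Nothing in the hypotheses or in the earlier lemmas makes $\de M=\wi M-M$ finite: it is merely a compact subset of $\wi M$, and you give no argument for finiteness (none is available, and none is needed). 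More damagingly, once your finite subcover contains balls centered at points of $\de M$, the points of $M$ lying in those balls need not lie in any of the chosen balls $B_\be(q_j,\de_0)$, so your displayed bound $A_\be(M)\le\sum_{j=1}^{N_0}A_\be\bl(\bar B_\be(q_j,\de_0)\br)\le N_0\cdot 4\pi$ does not follow: the $\be$-area of $M$ inside the $\de M$-centered balls is simply unaccounted for, and Lemma \ref{small_delta} says nothing about sets centered at completion points.

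The repair is small, and there are two ways to do it. The paper's way avoids $\wi M$ entirely: pre-compactness of a metric space is total boundedness, so for the set $D=(M-C)\cup S_\be(\p M,\ep_0)$ and any $0<\ep<\fr{\ep_0}5$ there exist finitely many points $q_1,\dots,q_{n_0}\in D$ with $D\subset\bigcup_{i}B_\be(q_i,\ep)$; since every center lies in $D$, it is at $\be$-distance at least $\ep_0$ from $\p M$, so Lemma \ref{small_delta} applies to each $B_\be(q_i,\ep)$ and $A_\be(M)\le A_\be(C)+n_0\,A_\nu\bl(B_\nu(N(q_1),\ep)\br)<+\infty$. Alternatively, if you want to keep your route through $\wi M$: you need no balls around points of $\de M$ at all, because every point of the closure of $M-\inte(C)$ in $\wi M$ — in particular every point of $\de M$ lying in that closure — is a limit of points of $M-\inte(C)$, hence lies in the $\wi M$-ball of radius $\de_0$ about some $q\in M-\inte(C)$. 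Thus the $\wi M$-balls centered at points of $M-\inte(C)$ already form an open cover of that compact closure; a finite subcover, intersected with $M$, covers $M-\inte(C)$ by finitely many balls $B_\be(q_j,\de_0)$ with $q_j\in M-\inte(C)$, and then your area estimate goes through verbatim.
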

\begin{proof} As in the proof of Lemma \ref{Misunbounded}, there exists $0<\ep_0<\pi$
such that the set $C=\bar B_\be(\p M,\ep_0)$ is a closed collar neighborhood of $\p M$. 
Set $S_0= S_\be(\p M,\ep_0)$. 
Fix $0<\ep<\fr{\ep_0}5$. Since $M$ is pre-compact then $D=(M-C)\cup S_0$ is pre-compact. 
Thus there exist points $q_1,\cdots,q_{n_0}\in D$ such that 
\begin{equation} D\subset\bigcup_{1\le i\le n_0}
B_\be(q_i,\ep).
\end{equation}
Set $B_i=B_\be(q_i,\ep)$. By Lemma \ref{small_delta}, $N|_{B_i}$ is 
injective, hence 
\begin{eqnarray*}A_\be(D)&\le& \sum_{i=1}^{n_0}A_\be(B_i)=\sum_{i=1}^{n_0}A_\nu(N(B_i))
\le \sum_{i=1}^{n_0}A_\nu(B_\nu\bl(N(q_i),\ep)\br)\\
&=&n_0 A_\nu\bl(B_\nu(N(q_1),\ep)\br),
\end{eqnarray*}
hence $A_\be(M)\le A_\be(C)+n_0 A_\nu\bl(B_\nu(N(q_1),\ep)\br)$. Lemma \ref{finite_area}
is proved.
\end{proof}

\begin{proof} [\bf Proof of Theorem A] Assume by contradiction that there exists 
an immersion $\varphi:M\to \Bbb R^3$ as in the statement of 
Theorem A. Let $\al$, respectively, $\be$ be the Riemannian metrics 
induced by $\varphi$, respectively, $N$. By using Lemmas \ref{lemmaA} and \ref{finite_area} 
we obtain that  $A_\be(M)$ is finite, which contradicts equation (\ref{infinite_area}) and proves 
Theorem A.
\end{proof}

\section {\bf Appendix - Proof of Lemma \ref{topologies}}

To prove Lemma \ref{topologies} we first assume that $d_\inte$ is a distance on $D$. To show
that $d_g$ and $d_\inte$ induce the same topology on $D$, we need to prove that, given $q\in D$ and $\ep>0$, there exists $\de>0$ such that
if $p\in D$ with $d_g(p,q)<\de$ then $d_\inte(p,q)<\ep$. If $q\in \inte(D)$ the proof is
trivial. Thus we will assume that $q\in\p D$.

Fix $q\in\p D$ and $\ep>0$. For some small $\la>0$, there exists a curve $\si:[-\la,\la]\to \p D, \la>0$,  parameterized by the $g$-arc length satisfying that $\si(0)=q$ and such that
$\si|_{[-\la,0]}$ and $\si|_{[0,\la]}$ are smooth curves. Since $D$ is a piecewise smooth surface with boundary and the angles at the vertices differ from $0$ and $2\pi$, there exists a unit vector $v\in T_qS$ pointing to $\inte(D)$ and 
transversal to both $\si'(0-)$ and $\si'(0+)$. Let $v_t$ be the parallel
transport of $v$ along the both directions on $\si$. If $\la$ is small enough, we may assume that $v_t$ is
transversal to $\si'(t)$ and that $v_t$ points to $\inte(D)$. Set $\si_s(t)=\exp_{\si(t)}sv_t=\g_t(s)$. By smoothness of the geodesic flow there exists
sufficiently small $0<\eta<\min\lf\{\la,\fr \ep 3\rg\}$ such that:
\begin{enumerate}
\item for $0<s\le \eta$ and $-\eta\le t\le \eta$, the point $\si_s(t)\in \inte(D)$;
\item $L_g(\si_\eta)<\fr\ep 3$.
\end{enumerate}

Given $s_0\in [0,\eta]$ and $t_0\in[-\eta,\eta]$, we will construct a piecewise smooth
curve $\xi=\xi_{s_0t_0}:[0,2\eta+|t_0|-s_0]\to D$ from $q$ to $\si_{s_0}(t_0)$ satisfying $\xi((0,2\eta+|t_0|-s_0))\subset \inte (D)$ and $L_g(\xi)<\ep$.
From $q$ to $\si_{\eta}(0)=\g_0(\eta)$, let $\xi|_{[0,\eta]}$ coincide with the geodesic $\g_0:[0,\eta]\to D$. From $\si_{\eta}(0)$ to $\si_{\eta}(t_0)$ the curve $\xi$ follows the
curve $\si_\eta$ in the direction that $t$ is increasing if $0\le t_0$, or in
the other direction if $t_0<0$. More precisely, for $0\le s\le |t_0|$,
we define $\xi(\eta+s)=\si_\eta(s)$, if $0\le t_0$, and $\xi(\eta+s)=\si_\eta(-s)$,
if $t_0<0$. Finally, from $\si_{\eta}(t_0)$ to $\si_{s_0}(t_0)$ the curve $\xi$ follows the geodesic $s\longmapsto
\g_{t_0}(\eta-s)$. Namely, for
$0\le s\le \eta-s_0$ we
define $\xi(\eta+|t_0|+s)=\g_{t_0}(\eta-s)=\si_{(\eta-s)}(t_0)$. By construction we have that $L_g(\xi)\le\eta+
L_g(\si_{\eta})+(\eta-s_0)<\ep$. Given $s_1\in [0,\eta]$ and $t_1\in[-\eta,\eta]$, a similar 
construction as above shows that $\si_{s_0}(t_0)$ may be connected to $\si_{s_1}(t_1)$ by 
a piecewise smooth curve $\psi:[0,1]\to D$ with $\psi((0,1))\subset \inte(D)$ and $L_g(\psi)
<\ep$.

Set $X=\{\si_s(t)\bigm|0\le s\le \eta,\, -\eta\le t\le \eta\}$. Since $X$ is a compact neighborhood of $q$ in $D$, we have that $\de=d_g(q,D-X)>0$.
Now we take $p\in D$ with $d_g(p,q)<\de$. We have that $p\in X$, hence
$p=\si_{s_0}(t_0)$ for some $s_0\in [0,\eta]$ and $t_0\in [-\eta,\eta]$.  As a consequence
we have that $d_{\inte}(p,q)\le L_g(\xi_{s_0t_0})<\ep$. 

Now we consider points $p,q,r\in D$ and we will show that $d_\inte(p,q)+d_\inte(q,r)
\ge d_\inte(p,r)$. We will just consider the case that $q\in \p D$, since the other
is trivial. Fix $\ep>0$ and consider piecewise smooth curves $\g:[0,1]\to D$ from $p$ to $q$ with
$\g((0,1))\subset \inte(D)$ and $L_g(\g)<d_\inte(p,q)+\ep$, and
$\si:[0,1]\to D$ from $q$ to $r$ with
$\si((0,1))\subset \inte(D)$ and $L_g(\si)<d_\inte(q,r)+\ep$.
By using a neighborhood $X$ of $q$ as above, it is easy to obtain a piecewise smooth curve
$\varphi:[0,1]\to D$ from $p$ to $r$ with 
$\varphi((0,1))\subset \inte(D)$ and $L_g(\varphi)<L_g(\g)+L_g(\si)+\ep$. In fact, 
take $0<s_1<1$ such that $\g(s_1)\in X-\{q\}$ and $0<s_2<1$ such that $\si(s_2)\in X-\{q\}$. We define a curve $\varphi$ which 
follows $\g$ from $t=0$ to $t=s_1$ then
follows a curve $\psi$ in $X\cap\inte(D)$ with $L_g(\psi)<\ep$, and then
follows $\si$ from $t=s_2$ to $t=1$. Thus we obtain
that $d_\inte(p,q)+d_\inte(q,r)+3\ep> d_\inte(p,r)$. By making
$\ep\to 0$ we conclude the proof of
Lemma \ref{topologies}.
\begin{remark} It is not difficult to see that Lemma \ref{topologies} may be improved to assume that internal angles are just different from $0$, but 
this weaker assumption is not necessary for the proof of Theorem A. 
\end{remark}

\end{document}